\documentclass[12pt]{article}
\usepackage{graphicx,subfigure}
\usepackage{amsmath,amsthm,amssymb,enumerate}
\usepackage{euscript,mathrsfs}
\usepackage{color}
\usepackage{dsfont}
\usepackage{url}
\usepackage{bm}
\usepackage[left=2cm,right=2cm,top=3.5cm,bottom=3.5cm]{geometry}
\usepackage{color}
\usepackage[framemethod=tikz]{mdframed}
\allowdisplaybreaks

\usepackage{soul}

\catcode`\@=11 \@addtoreset{equation}{section}

\catcode`\@=12

\newtheorem{Theorem}{Theorem}[section]
\newtheorem{Proposition}[Theorem]{Proposition}
\newtheorem{Lemma}[Theorem]{Lemma}
\newtheorem{Corollary}[Theorem]{Corollary}

\theoremstyle{definition}
\newtheorem{Definition}[Theorem]{Definition}

\newtheorem{Remark}[Theorem]{Remark}

\newcommand{\bTheorem}[1]{
	\begin{Theorem} \label{T#1} }
	\newcommand{\eT}{\end{Theorem}}

\newcommand{\bProposition}[1]{
	\begin{Proposition} \label{P#1}}
	\newcommand{\eP}{\end{Proposition}}

\newcommand{\bLemma}[1]{
	\begin{Lemma} \label{L#1} }
	\newcommand{\eL}{\end{Lemma}}

\newcommand{\bCorollary}[1]{
	\begin{Corollary} \label{C#1} }
	\newcommand{\eC}{\end{Corollary}}

\newcommand{\bRemark}[1]{
	\begin{Remark} \label{R#1} }
	\newcommand{\eR}{\end{Remark}}

\newcommand{\bDefinition}[1]{
	\begin{Definition} \label{D#1} }
	\newcommand{\eD}{\end{Definition}}

\newcommand{\YM}[1]{ \left< \mathcal{V}_{t,x}; #1 \right> }

\newcommand{\tvm}{\widetilde{\vc{m}}}
\renewcommand{\tvm}{\widetilde{\bm{m}}}
\newcommand{\tS}{\widetilde{S}}
\newcommand{\bfphi}{\boldsymbol{\varphi}}

\newcommand{\bFormula}[1]{
	\begin{equation} \label{#1}}
	\newcommand{\eF}{\end{equation}}

\newcommand{\Ov}[1]{\overline{#1}}

\newcommand{\toMo}{\stackrel{\mathcal{M}}{\to}}
\newcommand{\toW}{\stackrel{\mathcal{W}}{\to}}

\newcommand{\vr}{\varrho}

\newcommand{\tvr}{\wtilde \vr}

\newcommand{\tvt}{\wtilde \vt}

\newcommand{\vt}{\vartheta}
\newcommand{\vu}{\bm{u}}
\newcommand{\vm}{\bm{m}}

\newcommand{\vc}[1]{{\bf #1}}

\newcommand{\Div}{{\rm div}_x}
\newcommand{\Grad}{\nabla_x}

\newcommand{\dx}{\,{\rm d} {x}}

\newcommand{\dt}{\,{\rm d} t }

\newcommand{\intO}[1]{\int_{\Omega} #1 \ \dx}

\newcommand{\D}{{\rm d}}

\newcommand{\ep}{\varepsilon}

\newcommand{\br}{ \nonumber \\ }

\def\softd{{\leavevmode\setbox1=\hbox{d}%
		\hbox to 1.05\wd1{d\kern-0.4ex{\char039}\hss}}}
\definecolor{Cgrey}{rgb}{0.85,0.85,0.85}
\definecolor{Cblue}{rgb}{0.50,0.85,0.85}
\definecolor{Cred}{rgb}{1,0,0}
\definecolor{fancy}{rgb}{0.10,0.85,0.10}
\definecolor{amaranth}{rgb}{0.9, 0.17, 0.31}

\newcommand\Cbox[2]{%
	\newbox\contentbox%
	\newbox\bkgdbox%
	\setbox\contentbox\hbox to \hsize{%
		\vtop{
			\kern\columnsep
			\hbox to \hsize{%
				\kern\columnsep%
				\advance\hsize by -2\columnsep%
				\setlength{\textwidth}{\hsize}%
				\vbox{
					\parskip=\baselineskip
					\parindent=0bp
					#2
				}%
				\kern\columnsep%
			}%
			\kern\columnsep%
		}%
	}%
	\setbox\bkgdbox\vbox{
		\color{#1}
		\hrule width  \wd\contentbox %
		height \ht\contentbox %
		depth  \dp\contentbox
		\color{black}
	}%
	\wd\bkgdbox=0bp%
	\vbox{\hbox to \hsize{\box\bkgdbox\box\contentbox}}%
	\vskip\baselineskip%
}

\mdfdefinestyle{MyFrame}{%
	linecolor=black,
	outerlinewidth=1pt,
	roundcorner=5pt,
	innertopmargin=\baselineskip,
	innerbottommargin=\baselineskip,
	innerrightmargin=10pt,
	innerleftmargin=10pt,
	backgroundcolor=white!20!white}




\newcommand{\wtilde}{\widetilde}

\allowdisplaybreaks


\begin{document}


\title{\bf Well-posedness of the Euler system of gas dynamics}

\author{Eduard Feireisl
	\thanks{The work of E.F. was partially supported by the
		Czech Sciences Foundation (GA\v CR), Grant Agreement
		24--11034S. The Institute of Mathematics of the Academy of Sciences of
		the Czech Republic is supported by RVO:67985840.
		E.F. is a member of the Ne\v cas Center for Mathematical Modelling and Mercator Fellow in SPP 2410 ``Hyperbolic Balance Laws: Complexity, Scales and Randomness".}
	\and
M\' aria Luk\'a\v{c}ov\'a-Medvi\softd ov\'a
\thanks{The work of  M.L.-M. was supported by the Gutenberg Research College and by
		the Deutsche Forschungsgemeinschaft (DFG, German Research Foundation) -- project number 233630050 -- TRR 146 and
		project number 525853336-- SPP 2410 ``Hyperbolic Balance Laws: Complexity, Scales and Randomness".
		She is also grateful to the  Mainz Institute of Multiscale Modelling for supporting her research. }  
}

\date{}

\maketitle

\centerline{$^*$Institute of Mathematics of the Academy of Sciences of the Czech Republic}
\centerline{\v Zitn\' a 25, CZ-115 67 Praha 1, Czech Republic}
\centerline{feireisl@math.cas.cz}

\medskip

\centerline{$^\dagger$Institute of Mathematics, Johannes Gutenberg-University Mainz}
\centerline{Staudingerweg 9, 55 128 Mainz, Germany}
\centerline{lukacova@uni-mainz.de}

\begin{abstract}
	
We propose a new two-step selection criterion applicable 
to the dissipative measure--valued solutions of the Euler system of 
gas dynamics. The process consists of a successive maximisation of the entropy 
production rate and the total energy defect, i.e. maximisation of the turbulent energy.
\begin{itemize}
\item If the selected solution is a weak solution of the Euler system,  then it is identified in the first step. Solutions selected in the second step are 
truly measure--valued maximising the energy defect. Accordingly, they are called turbulent solutions.	 
\item The energy defect of turbulent solutions vanishes with growing time.

\item The selected solutions depend in a Borel--measurable way on the initial data. In particular, they are ``\emph{almost continuously dependent}'' on the initial data.
\end{itemize}

\noindent	Finally, we discuss a selection criterion yielding a maximal dissipative solution of the Euler system in a single step.

\end{abstract}


{\small

\noindent
{\bf 2020 Mathematics Subject Classification:} 35 Q 31, 35 A 02, 35 D 30

\medbreak
\noindent {\bf Keywords:} Euler system of gas dynamics, generalised solutions, consistent approximation, maximal entropy production, admissibility

\tableofcontents

}

\section{Introduction}
\label{i}

The Euler system of gas dynamics governing the motion of 
a gas confined to a bounded domain 
$\Omega \subset R^d$, $d=1,2,3$ consists of three 
field equations (balance laws):
\begin{align}
	\partial_t \vr + \Div \vm &= 0, \label{i1}\\
	\partial_t \vm + \Div \left( \frac{\vm \otimes \vm}{\vr} \right) +
	\Grad p &= 0, \label{i2} \\
	\partial_t E + \Div \left[ \Big( E + p \Big) \frac{\vm}{\vr} \right] &= 0,
	\label{i3}	
\end{align}
where $\vr = \vr(t,x)$ is the mass density, $\vm = \vm(t,x) = \vr \vu (t,x)$ the momentum with the velocity $\vu$,
$p$ the pressure, and $E$ the total energy. In addition, we impose the impermeability condition
\begin{equation} \label{i4}
	\vm \cdot \bm{n}|_{\partial \Omega} = 0,\ \bm{n} \ \mbox{the outer normal vector to}\ \partial \Omega.
\end{equation}

As  is well-known, the classical solutions of the Euler 
system develop singularities - shock waves - for a fairly general class of initial data, while the problem admits infinitely many weak (distributional) solutions. What is more suprising is the fact that the ill-posedness in the class of weak solutions persists even if the original system is supplemented by an \emph{admissibility condition} 
\begin{equation} \label{i5}
	\partial_t S + \Div \left( S \frac{\vm}{\vr} \right)  \geq 0,
\end{equation}	
where $S$ is the total entropy of the system. More specifically, the recent results based on the application of convex integration include: 
\begin{itemize} 
	\item The existence of infinitely many admissible weak solutions for the system 
	\eqref{i1}--\eqref{i4} for a vast class of initial data, see \cite{FeKlKrMa}, \cite{KlKrMaMa}. 
\item Density of the so-called wild data giving rise to infinitely many admissible weak solutions, see \cite{ChFe2023}.
\item Ill-posedness of the problem perturbed by a stochastic forcing, see \cite{ChFeFl2019}.
\end{itemize}

\subsection{Two-step selection process}

The leading idea behind the concept of \emph{turbulent solution} goes back to \cite{BreFeiHof19C}, where a multilevel selection process was proposed to identify 
a unique element in the class of \emph{dissipative measure--valued} (DMV) solutions of the Euler system. The DMV solutions can be seen as limits of suitable consistent approximations arising as artificial viscosity limits or 
asymptotic limits of stable and consistent numerical schemes, cf.~the monograph \cite{FeLMMiSh}.

Similarly to \cite{BreFeiHof19C}, the turbulent solutions of the Euler system \eqref{i1}--\eqref{i5} will be represented by the trio $(\vr, \vm, S)$ supplemented with the \emph{total energy} 
\begin{equation} \label{i6}
	\mathcal{E}_0 = \intO{ E_0 }, 
\end{equation}	
where $E_0 = E(\vr_0, \vm_0, S_0) $ is the energy of the system at the initial state $(\vr_0, \vm_0, S_0)$.

In accordance with the boundary condition \eqref{i4}, the system is energetically \emph{closed}. Our approach is based on enforcing two underlying physical principles:

\begin{itemize}
	\item {\bf The First Law of Thermodynamics.} The total energy  $\mathcal{E} = \mathcal{E}_0$ is constant in time. 
	\item {\bf The Second Law of Thermodynamics.} The total 
	entropy $\intO{ S(t, \cdot) }$ is a non--decreasing function of time.
	 
\end{itemize}

The turbulent solutions will be selected among all 
DMV solutions to the Euler system emanating from given initial data. A prominent role in the selection process is played by the \emph{mean energy} 
\begin{equation} \label{i7}
\intO{ E(\vr, \vm, S) (t, \cdot) }, \ \mbox{where}\ 
E(\vr, \vm, S) = \frac{1}{2} \frac{|\vm|^2}{\vr} + \vr e(\vr, S).
\end{equation}
Unlike the total energy $\mathcal{E}$, the mean energy may not be constant in time but satisfies the inequality 
\begin{equation} \label{i8} 
\mathcal{D}(\vr, \vm, S)(t) = \mathcal{E} - \intO{E(\vr, \vm, S) (t, \cdot) } \geq 0,
\end{equation} 
where the quantity $\mathcal{D}(t)$  
is called \emph{energy defect}. 
The energy defect can be identified with \emph{turbulent energy} - the difference between the total and the mean energy of the system.
As we shall see, the 
standard \emph{weak solutions} of the Euler system are characterized by the property $\mathcal{D} \equiv 0$. The turbulent solutions, in general, will maximise the energy defect, in particular, $\mathcal{D}(t) > 0$. 

Let 
\[
\mathcal{U}[\vr_0, \vm_0, S_0] 
\]  
denote the family of all DMV solutions emanating from the 
initial data $(\vr_0, \vm_0, S_0)$ defined on the full time interval $t \in [0, \infty)$. Following the strategy of 
Krylov \cite{KrylNV}, developed in the context of deterministic problems by Cardona and Kapitanski \cite{CorKap}, and finally adapted to the Euler system in \cite{BreFeiHof19C}, we identify the unique solution via 
\emph{successive} maximisation/minimisation of two ``cost'' functionals: 
\begin{align}
\mathcal{F}_S[\vr, \vm, S] &=  \int_0^\infty \exp(-t) \left( \intO{ S(t, \cdot)} \right) \dt , \br  	
\mathcal{F}_E[\vr, \vm, S] &=  \int_0^\infty \exp(-t) \left( \intO{ E(\vr, \vm, S)(t, \cdot)} \right) \dt.
\label{i10}	
\end{align}
As the set $\mathcal{U}[\vr_0, \vm_0, S_0]$ is convex 
and $\mathcal{F}_S$ linear , the set of maximisers 
\[
\mathcal{U}_S[\vr_0, \vm_0, S_0] = {\rm argmax}\, \mathcal{F}_S \Big[ \mathcal{U}[\vr_0, \vm_0, S_0] \Big] 
\subset \mathcal{U}[\vr_0, \vm_0, S_0] 
\]
is a convex subset of $\mathcal{U}[\vr_0, \vm_0, S_0]$. 
Then the turbulent solution is obtained in the second minimisation step 
\[
\mathcal{U}_{ES}(\vr, \vm, S) = {\rm argmin}\, \mathcal{F}_E \Big[ {\rm argmax}\, \mathcal{F}_S [\mathcal{U}[\vr_0, \vm_0, S_0] 
 \Big].
\]
The fact that the second step yields a unique solution follows from the strict convexity of the functional $\mathcal{F}_E$. The strict convexity of $\mathcal{F}_E$, in turn, is equivalent to the hypothesis of thermodynamic stability imposed on the equations of state.

Similarly to \cite{FeiJuLu}, we have reduced the selection procedure to two steps. The idea of obtaining 
turbulent solutions via maximising the energy defect 
was also used by Klingenberg, Markfelder, and Wiedemann 
\cite{KlMaWi}. In the present context, this criterion yields a (unique) solution in only one step, minimising $\mathcal{F}_E$ directly. Unfortunately, such a selection would automatically eliminate all \emph{weak} solutions of the 
problem, except in a very unlikely event that there is a weak solution unique in the class of DMV solutions.  
Maximising the entropy production in the first step will select physically admissible solutions, including possibly weak solutions. In particular, we show that all of them comply with DiPerna's maximality criterion, cf. \cite{DiP2}. 
Moreover, their energy defect $\mathcal{D}(\vr, \vm, S)(t)$ vanishes asymptotically as $t \to \infty$.
The second step identifies the unique turbulent solution that maximises the energy defect—the turbulent energy. Thus, these two selection steps seem to go in opposite directions.

\subsection{Main objectives}

The two-step selection process described in the previous section yields 
the unique turbulent solution of the Euler system. Besides providing 
a detailed description of the selection process, we shall discuss the following issues. 

\begin{itemize} 
	\item {\bf Well-posedness.}
For any given initial data $(\vr_0, \vm_0, S_0)$, and the associated total energy $\mathcal{E}_0$, the Euler system admits a unique, turbulent, solution $(\vr, \vm, S)$. The mapping 
\[
(\vr_0, \vm_0, S_0) \mapsto (\vr, \vm, S) (t, \cdot) 
\] 
is Borel--measurable in suitable topologies. Consequently, applying the method. 
of \cite{BreFeiHof19C} we show the turbulent solutions form 
a Borel--measurable semigroup, see Section \ref{d}, and 
Theorem \ref{caT1}, Section \ref{step1}, Theorem \ref{caT2}, Section \ref{ds}.

\item {\bf Weak regularity.} 

If the selected solution is a \emph{weak} solution, then it must be identified in the first step, see Section \ref{step1}. Any solution selected in the second step is a truly turbulent solution with a non-zero energy defect  $\mathcal{D}$, see Section \ref{ds}.

\item {\bf Vanishing energy defect.}
The energy defect of any turbulent solution vanishes with growing time, 
\begin{equation} \label{i11}
	\mathcal{D}(\vr, \vm, S)(t) \to 0 \ \mbox{as}\ t \to \infty,
\end{equation}
see Section \ref{step1}.

\item {\bf Almost continuity.} 

Given $\ep > 0$ and a Borel probability measure on the space of initial data, the solution semigroup is continuous on a complement of a set of $\ep -$measure, see Section \ref{conc}. 

\end{itemize}

\section{Dissipative measure--valued (DMV) solutions}
\label{d}

For definiteness,
we consider a polytropic equation of state (EOS) 
\begin{equation} \label{r1a}
p = (\gamma - 1) \vr e ,\ \gamma > 1.
\end{equation}
Moreover, without loss of generality, we introduce the \emph{absolute temperature} $\vt$ proportional to internal energy $e$,
\begin{equation} \label{r2a}
e = c_v \vt, \ \mbox{where}\  c_v = \frac{1}{\gamma -1}.
\end{equation}
Accordingly, we set
\begin{equation} \label{r3a}
p(\vr, S) = (\gamma - 1) \vr e (\vr, S) = \left\{ \begin{array}{l}
\vr^\gamma \exp \left( \frac{S}{c_v \vr} \right) \ \mbox{if} \ \vr > 0, \\
0 \ \mbox{if}\ \vr = 0,\ S \leq 0, \\
\infty \ \mbox{otherwise}.
\end{array}	
\right.
\end{equation}
It is easy to check
\begin{equation} \label{r1}
	\frac{\partial \vr e(\vr, S)}{\partial S} = \vt > 0 \ \mbox{whenever}\ \vr > 0,
\end{equation}
meaning $\vr e(\vr, S)$ is a strictly increasing function of $S$.
Introducing the kinetic energy
\begin{equation} \label{RR1}
\frac{1}{2} \frac{|\vm|^2}{\vr} = \left\{ \begin{array}{l}
	\frac{1}{2} \frac{|\vm |^2}{\vr} \ \mbox{if}\ \vr > 0,\\
	0 \ \mbox{if}\ \vr = 0, \vm = 0, \\
	\infty \ \mbox{otherwise} \end{array}
\right.
\end{equation}
we can check that the total energy
\[
E(\vr, \vm, S) = \frac{1}{2} \frac{|\vm|^2}{\vr} + \vr e(\vr, S)
\]
is a convex l.s.c. function of the state variables $(\vm, \vr, S) \in R^{d+2}$, strictly convex on its domain. 
The convexity of the total energy is equivalent to   
\emph{thermodynamic stability} of the system, cf.
Bechtel, Rooney, Forest \cite{BEROFO} or
\cite[Chapter 2, Section 2.2.4]{FeLMMiSh}. This assumption
is absolutely crucial in the subsequent analysis. More general EOS can be considered as long as the thermodynamic stability is preserved.

\subsection{DMV solutions}

A dissipative measure-valued solution consists of a parametrised (Young) measure
\begin{equation} \label{d1}
	\mathcal{V} = \mathcal{V}_{t,x}: (t,x) \in (0,\infty) \times \Omega \to \mathfrak{P} (R^{d+2}),\
	\mathcal{V} \in L^\infty_{\rm weak-(*)}((0,\infty) \times \Omega; \mathfrak{P} (R^{d+2})),
\end{equation}
where $\mathfrak{P}$ denotes the set of Borel probability measures defined
on the space of ``dummy'' variables
\[
R^{d+2} = \left\{ (\tvr, \tvm, \widetilde{S}) \Big|  (\vr, \vm,  S) \in R^{d+2}             \right\},
\]
and a concentration measure
\begin{equation} \label{d1a}
\mathfrak{C} \in L^\infty_{\rm weak-(*)}(0,\infty; \mathcal{M}^+ (\Ov{\Omega}; R^{d \times d}_{\rm sym})),
\end{equation}
where
the symbol $\mathcal{M}^+ (\Ov{\Omega}; R^{d \times d}_{\rm sym}))$ denotes the set of all matrix-valued positively semi-definite finite Borel measures on
the compact set $\Ov{\Omega}$.

\medskip
	
\begin{Definition}[{\bf DMV solution}] \label{Dd2}
	
We shall say that a parametrized measure $\{ \mathcal{V}_{t,x} \}$ and
a concentration measure $\mathfrak{C}$ represent \emph{dissipative measure--valued (DMV) solution} of the Euler system \eqref{i1}--\eqref{i3}, \eqref{i4}, with the initial data $(\vr_0, \vm_0, S_0)$ and the initial total energy $\mathcal{E}_0$ if the 
following holds:
\begin{itemize}		
\item
The equation of continuity
\begin{equation} \label{d2}
\int_0^\infty \intO{ \Big[ \left< \mathcal{V}_{t,x}; \tvr \right> \partial_t \varphi(t,x) +
	\YM{ \widetilde{\vm} }\cdot \Grad \varphi(t,x) \Big]} \dt = - \intO{ \vr_0(x) \varphi(0, x) } 	
\end{equation}
holds for any $\varphi \in C^1_c([0,\infty) \times \Ov{\Omega})$.
\item
The momentum equation
\begin{align}
\int_0^\infty &\intO{ \left[ \YM{ \tvm } \cdot \partial_t \bfphi(t,x)  + \YM{ \mathds{1}_{\tvr > 0} \left(
\frac{ \tvm \otimes \tvm}{\tvr} + p(\tvr, \widetilde{S}) \mathbb{I} \right) } \Grad \bfphi(t,x) 	 \right] } \dt \br
&= - \int_0^\infty \int_{\Ov{\Omega}} \Grad \bfphi(t,x) : \D \mathfrak{C} (t,x)
\label{d3}	- \intO{ \vc{m}_0(x) \cdot \bfphi(0, x) }
	\end{align}
holds for any $\bfphi \in C^1_c([0,\infty) \times \Ov{\Omega}; R^d)$, $\bfphi \cdot \bm{n}|_{\partial \Omega} = 0$.
\item
The entropy inequality
\begin{align}
\int_0^\infty &\intO{ \left[ \YM{ \widetilde{S} } \partial_t \varphi(t,x) +
\YM{ \mathds{1}_{\tvr > 0} \widetilde{S} \frac{\tvm}{\tvr} } \cdot \Grad \varphi(t,x) \right] } \dt
\br &\leq - \intO{ S_0(x) \varphi (0, x) }
\label{d4}		
	\end{align}
holds for any $\varphi \in C^1_c([0,\infty) \times \Ov{\Omega})$, $\varphi \geq 0$.
\item
The energy compatibility condition
\begin{align}
	\mathcal{E}_0 \geq 
	\intO{ E(\vr_0, \vm_0, S_0) }
	&\geq \intO{ \YM{ E(\tvr, \tvm, \widetilde{S}) }  }	+   r(d, \gamma)\int_{\Ov{\Omega}} \ \D {\rm trace}[ \mathfrak{C}](t,x) ,\br
	r(d, \gamma) &= \min \left\{ \frac{1}{2}; \frac{d \gamma}{\gamma - 1} \right\}
	\label{d5}
\end{align}	
holds for a.a. $t \in (0,\infty)$.

\end{itemize}

\end{Definition}

\medskip

The concept of DMV solution was introduced in \cite{BreFei17} and later elaborated in the monograph \cite[Chapter 5]{FeLMMiSh}. Kr\" oner and Zajaczkowski \cite{KrZa}
introduced a different class of measure--valued solution to the Euler system postulating entropy \emph{equation} rather than inequality \eqref{d4}. Unfortunately,
a physically admissible weak solution with a shock that produces entropy does not fit in this class.

Let us summarise some of the known properties of DMV solutions. 

\begin{enumerate}
	\item {\bf Global existence.} Let the initial data belong to the class 
\begin{align}
	(\vr_0, \vm_0, S_0) \in L^1(\Omega; R^{d+2}),\
	\intO{ E(\vr_0, \vm_0, S_0) } \leq \mathcal{E}_0 < \infty, \ S_0 \geq \underline{s} \vr_0 	\
	\mbox{a.a. in}\ \Omega,
	\label{dclass}
\end{align}
where $\underline{s} \in R$ is a given constant. Then the Euler system \eqref{i1}--\eqref{i3}, \eqref{i4} admits a DMV solution
in $(0,\infty) \times \Omega$ specified in Definition \ref{Dd2}. In addition,
the solution satisfies
\begin{equation}\label{dd1}
	\mathcal{V}_{t,x} \left\{ \tvr \geq 0,\ \widetilde{S} \geq \underline{s} \tvr \right\} = 1 \ \mbox{for a.a.}\ (t,x) \in (0,\infty) \times \Omega.
\end{equation}
For the proof see \cite[Proposition 3.8]{BreFeiHof19C}. 

In addition, the expected values 
\begin{equation} \label{expe}
\vr(t,x) = \left< \mathcal{V}_{t,x}; \tvr \right>,\ 
\vm (t,x) = \left< \mathcal{V}_{t,x}; \tvm \right>, 
S(t,x) = \left< \mathcal{V}_{t,x}; \widetilde{S} \right>\ 
\end{equation}	
belong to the class
\begin{equation} \label{expe1}
\vr, S \in L^\infty(0,\infty; L^\gamma (\Omega)),\ \vm \in L^\infty(0,\infty; L^{\frac{2 \gamma}{\gamma + 1}}(\Omega; R^d)),
\end{equation}	
see \cite[Chapter 4, Section 4.1.5]{FeLMMiSh}.	
Similarly to \cite{BreFeiHof19C}, 
we call the triple of expected values $(\vr, \vm, S)$, together with $\mathcal{E}_0$,  
\emph{dissipative solution} of the Euler system.
\item {\bf Convexity and compactness.} 
Let 
\begin{align} \label{expe3}
\mathcal{U}[\vr_0, \vm_0, S_0; \mathcal{E}_0] = 
&\left\{ (\vr, \vm, S) \ \Big|\ (\vr, \vm, S) 
\ \mbox{is a dissipative solution in}\ (0,\infty) \times 
\Omega \right. \br
&\mbox{emanating from the data}\ 
(\vr_0, \vm_0, S_0; \mathcal{E}_0) \Big\}
\end{align}
denote the solution set associated to the data $(\vr_0, \vm_0, S_0; \mathcal{E}_0)$

Then the following holds:
\begin{itemize}
	\item The set $\mathcal{U}[\vr_0, \vm_0, S_0; \mathcal{E}_0]$ is non--empty and convex for any initial data satisfying 
	\eqref{dclass}. 
	\item The set $\mathcal{U}[\vr_0, \vm_0, S_0; \mathcal{E}_0]$ is closed
	and bounded  
	in the weighted Lebesgue space 
	\begin{equation} \label{expe4}
	L^q_\omega ((0,\infty) \times \Omega; R^{d+2}) \ \mbox{with the measure} \ \D \omega = \exp(-t)\dt \times \dx 
	 \end{equation}
for any $1\leq q \leq \frac{2 \gamma}{\gamma + 1}$. 
\item The set $\mathcal{U}[\vr_0, \vm_0, S_0; \mathcal{E}_0]$ is compact in the space
\begin{equation} \label{expe5}
L^q_\omega(0, \infty; W^{-\ell,2}(\Omega; R^{d+2}) ),\ \ell > 3,	
\end{equation}
for any $1\leq q < \infty$.		 
\end{itemize}
For the proof, see \cite[Chapter 5, Theorem 5.2]{FeLMMiSh}.

\item {\bf Continuity in time.}

The dissipative solution $(\vr, \vm, S)$ admits 
\emph{instantaneous values} defined 
\begin{align}
\lim_{\delta \to 0+} \frac{1}{\delta}\int_{t - \delta}^t\intO{ \vr(s, x) \varphi(x) } \D s
&= \intO{ \vr(t,x) \varphi(x) } =  
	\lim_{\delta \to 0+} \frac{1}{\delta} \int_{t}^{t +\delta} \int_\Omega \vr(s,x) \varphi(x) \mbox{d} x  \mbox{d} s, \br
\lim_{\delta \to 0+} \frac{1}{\delta}\int_{t - \delta}^t\intO{ \vm(s, x) \cdot \bfphi(x) } \D s
&= \intO{ \vm(t,x) \cdot \bfphi(x) } \br &=  
\lim_{\delta \to 0+} \frac{1}{\delta} \int_{t}^{t +\delta} \int_\Omega \vm(s,x) \cdot \bfphi(x) \mbox{d} x  \mbox{d} s 
\nonumber 
\end{align}
for any $\varphi \in L^\infty(\Omega)$, $\bfphi \in L^\infty(\Omega; R^d)$; and
\begin{align}
\lim_{\delta \to 0+} \frac{1}{\delta} \int_{t - \delta}^t 
\intO{ S(s,x) \varphi(x) } \D s &\leq 
\intO{ S(t-,x) \varphi (x) } \br &\leq 
\intO{ S(t+,x) \varphi(x) }	= 	
	\lim_{\delta \to 0+} \frac{1}{\delta} \int_{t}^{t + \delta} \int_\Omega S(s,x) \varphi(x) \mbox{d} x \mbox{d} s,
	\nonumber
\end{align}
for any $\varphi \in L^\infty(\Omega)$, $\varphi \geq 0$ provided we have set
\[
\vr(t,\cdot) = \vr_0, \ \vm(t,\cdot) = \vm_0,\ S(t,\cdot) = S_0 \ \mbox{for}\ t \leq 0.
\]

 In view of the above identities, we have
\begin{align}
	\vr &\in C_{\rm weak}([0,\infty); L^\gamma (\Omega)), \ \vr(0, \cdot) = \vr_0, \br
	\vm &\in C_{\rm weak}([0, \infty; L^{\frac{2 \gamma}{\gamma + 1}}(\Omega; R^d)),\
	\vm(0, \cdot) = \vm_0, \br
	S &\in BV_{\rm weak}(0, \infty; L^\gamma (\Omega)),\
	S(0-, \cdot) = S_0,\ S(t-, \cdot) \leq S(t+, \cdot) \ \mbox{for any}\ t \geq 0
	\label{dd2}
\end{align}	
see \cite{BreFeiHof19C} for details. In particular, the entropy 
component of a dissipative solution can be identified with the left limit $S(t-, \cdot)$, which can be seen as a (weakly) c\` agl\` ad function of $t \in [0,\infty)$ with values in $L^\gamma(\Omega)$. Finally, it follows from the entropy inequality \eqref{d4} that the total entropy
\[
t \in [0, \infty) \to \intO{ S(t-, \cdot) },\ S(0-, \cdot) = S_0
\]
is a non--decreasing (c\` agl\` ad) function in $[0, \infty)$. 

\item {\bf Weak measurability.}
Motivated by \cite{BreFeiHof19C} we introduce the data space 
\begin{align} \
	X_D &= \left\{ (\vr_0, \vm_0, S_0; \mathcal{E}_0)     \Big| 
	(\vr_0, \vm_0, S_0) \in L^1(\Omega; R^{d+2}),\ 
	S_0 \geq \underline{s} \vr_0 ,\ 
	\intO{ E(\vr_0, \vm_0, S_0) } \leq \mathcal{E}_0 \right\} 
	\br &\subset W^{-\ell,2}(\Omega; R^{d+2}) \times [0, \infty), \ \ell > d,
	\label{ca1}
\end{align}
with the topology of the Hilbert space $W^{-\ell,2}(\Omega; R^{d+2}) \times R$. Given $\underline {s} \in R$, the data space $X_D$ is a non--empty convex compact subset of the Hilbert space
$W^{-\ell,2}(\Omega; R^{d+2}) \times R$. 

As shown 
in \cite[Section 4, Lemma 4.5]{BreFeiHof19C}, the solution mapping 
\begin{equation} \label{ca2}
	\mathcal{U}: (\vr_0, \vm_0, S_0; \mathcal{E}_0) \in X_D \mapsto 
	\mathcal{U}[\vr_0, \vm_0, S_0; \mathcal{E}_0] \in  {\rm comp} \Big[ L^2_\omega (0,\infty; W^{-\ell,2} (\Omega; R^{d+2})) \Big]  , 
\end{equation}	
where ${\rm comp}$ denotes the metric space of all compact subsets 
endowed with the Hausdorff topology, is Borel--measurable. Moreover, 
\begin{equation} \label{ca33}
(\vr(t, \cdot), \vm(t, \cdot), S(t\pm), \cdot), \mathcal{E}_0) 
\in X_D	\ \mbox{for any}\ t \geq 0
\end{equation}	
whenever 
\[
(\vr, \vm, S)  \in \mathcal{U}[\vr_0, \vm_0, S_0; \mathcal{E}_0]. 
\]

\end{enumerate}

Besides the above facts crucial for the subsequent 
analysis, the dissipative solutions enjoy other important properties, 
among which the weak--strong uniqueness and compatibility with the strong solutions, see \cite{FeLMMiSh}.  

\section{Step 1 - maximal entropy production}
\label{step1}

Following the strategy of \cite{BreFeiHof19C} we introduce the 
\emph{first selection functional} 
\[
\mathcal{F}_S (\vr, \vm, S) = \int_0^\infty  \exp(-t) \left( \intO{ S(t, \cdot) } \right) 
\dt 
\]
which is a bounded linear form defined on the solution set 
$\mathcal{U}[\vr_0, \vm_0, S_0; \mathcal{E}_0]$ endowed with the topology of the Banach space $L^q_\omega ((0,\infty) \times \Omega; R^{d+2}) \times R$, $1 \leq q \leq \frac{2 \gamma}{\gamma +1}$.

Next, let us introduce the set of maximisers of the entropy production, 
\begin{align}
\mathcal{U}_S[ \vr_0, \vm_0, S_0; \mathcal{E}_0] &\equiv	{\rm argmax} \mathcal{F}_S \Big[ \mathcal{U}[\vr_0, \vm_0, S_0; \mathcal{E}_0] \Big]\br &= \left\{ (\vr, \vm, S) \in \mathcal{U}[\vr_0, \vm_0, S_0; \mathcal{E}_0]\ \Big|\ \mathcal{F}_S [\vr, \vm, S] \geq 
	\mathcal{F}_S [\tvr, \tvm, \widetilde{S}]  \right. \br 
	&\quad \quad \mbox{for any}\ (\tvr, \tvm, \widetilde{S}) \in 
	\mathcal{U}[\vr_0, \vm_0, S_0; \mathcal{E}_0]    \Big\}. 
	\label{ca3}
\end{align}
We refer to \cite[Section 4, Lemma 4.5]{BreFeiHof19C} for the proof that the set valued mapping
\begin{equation} \label{ca4}
	\mathcal{U}_S: (\vr_0, \vm_0, S_0; \mathcal{E}_0) \in X_D \mapsto 
	{\rm argmax}\, \mathcal{F}_S \Big[ \mathcal{U}[\vr_0, \vm_0, S_0; \mathcal{E}_0] \Big] \in {\rm comp} \Big[ L^2_\omega (0,T; W^{-\ell,2} (\Omega; R^{d+2})) \Big]
\end{equation}	 
is Borel--measurable. Moreover, as $\mathcal{F}_S$ is linear, the 
set $\mathcal{U}_S [ \vr_0, \vm_0, S_0; \mathcal{E}_0] $ is a convex set. 

Our next goal is to show the Borel measurability of the same mapping 
\begin{align} 
	\mathcal{U}_S: (\vr_0, \vm_0, S_0; \mathcal{E}_0) \in X_D \mapsto 
	{\rm argmax}\, \mathcal{F}_S \Big[ \mathcal{U}[\vr_0, \vm_0, S_0; \mathcal{E}_0] \Big] &\in {\rm closed} \Big[ L^q_\omega (0,\infty; L^q (\Omega; R^{d+2})) \Big],\br 1 < q &\leq \frac{2 \gamma}{\gamma + 1},
	\label{ca5} 
\end{align}	
where the symbol ${\rm closed}$ refers to the family of 
all closed subsets. 
Indeed the set-valued mapping $\mathcal{U}_S$ specified in 
\eqref{ca4} is Borel--measurable, ranging in a compact metric space. Accordingly, it admits the so-called Castaign representation - 
a countable family of Borel--measurable selections - 
\begin{align}
\mathcal{U}^i_{S}: (\vr_0, \vm_0, S_0; \mathcal{E}_0) &\mapsto \br 
(\vr^i, \vm^i, S^i) &\in {\rm argmax} \mathcal{F}_S \Big[ \mathcal{U}[\vr_0, \vm_0, S_0; \mathcal{E}_0] \Big]
\in {\rm comp} \Big[ L^2_\omega (0,\infty; W^{-\ell,2} (\Omega; R^5)) \Big], 
\nonumber
\end{align}
$i=1,2, \dots$ such that
\[
{\rm closure}_{ L^2_\omega (0,T; W^{-\ell,2} (\Omega; R^5))}
\Big\{ (\vr^i, \vm^i, S^i)_{i=1}^\infty   \Big\} 
= {\rm argmax} \mathcal{F}_S \Big[ \mathcal{U}[\vr_0, \vm_0, S_0; \mathcal{E}_0] \Big], 
\]
cf.~Castaing, Valadier \cite{CasVal}.
Now, we may extend the family $(\vr^i, \vm^i, S^i)_{i=1}^\infty$ by considering finite convex combinations 
with rational coefficients, 
\[
\sum_{j=1}^m \lambda_j (\vr^i, \vm^i, S^i)  ,\ \lambda_j \in Q
\]

As the set ${\rm argmax}\, \mathcal{F}_S \Big[ \mathcal{U}[\vr_0, \vm_0, S_0; \mathcal{E}_0] \Big]$ is convex, we have 
\begin{align}
{\rm closure}_{ L^2_\omega (0,T; W^{-\ell,2} (\Omega; R^5))}
\Big\{ \sum_{j=1}^m \lambda_j (\vr^i, \vm^i, S^i)_{i,m=1}^\infty   \Big\} 
&= {\rm argmax}\, \mathcal{F}_S \Big[ \mathcal{U}[\vr_0, \vm_0, S_0; \mathcal{E}_0] \Big] \br &= 
\mathcal{U}_S [\vr_0, \vm_0, S_0, \mathcal{E}_0].
\nonumber
\end{align}
Finally, as the weak and strong closures coincide on convex sets, the extended system can be seen as a Castaing representation of 
$\mathcal{U}_S$ in the topology of the space 
$L^q_\omega(0, \infty; L^q(\Omega; R^{d+2}))$. This yields the desired
Borel measurability of $\mathcal{U}_S$ specified in \eqref{ca5}.

\begin{Remark} \label{Rac1}
In view of Hess's measurability theorem \cite{Hess}, the measurability of the solution mapping is equivalent to the Borel measurability of the set-valued mapping.
\[
\mathcal{U}_S: X_D \to {\rm closed}[L^q_\omega(0,\infty; L^q(\Omega; R^{d+2})],
\]
where the set ${\rm closed}[L^q_\omega(0,\infty; L^q(\Omega; R^{d+2})]$ 
is endowed with the (metrizable) Wijsman topology: 
\begin{equation} \label{s11}
	\mathcal{A}_n \toW \mathcal{A} \ \Leftrightarrow\
	{\rm dist}_{L^q_\omega}[y, \mathcal{A}_n] \to {\rm dist}_{L^q_\omega}[y, \mathcal{A}] \ \mbox{for any}\ y \in L^q_\omega(0,\infty; L^q(\Omega; R^{d+2}) .
\end{equation}
	
\end{Remark}	

\subsection{Properties of the solutions selected in Step 1}

Solutions selected in Step 1 belong to the convex set of entropy maximisers. 
\[
\mathcal{U}_S[\vr_0, \vm_0, S_0; \mathcal{E}_0] = {\rm argmax}\, \mathcal{F}_S \Big[ 
\mathcal{U}[\vr_0, \vm_0, S_0; \mathcal{E}_0] \Big],
\]
supplemented with the associated total energy component $\mathcal{E}_0$.
First, we show that these solutions comply with the so-called DiPerna maximality criterion, cf.~\cite{DiP2}. To see this, we introduce a partial ordering on the set $\mathcal{U}[\vr_0, \vm_0, S_0; \mathcal{E}_0]$: 
\begin{align}
	(\vr^1, \vm^1, S^1) &\prec_{DiP} (\vr^2, \vm^2, S^2) \br
	\qquad &\Leftrightarrow_{\rm def}  \qquad \br 
	\intO{ S^1(t+, \cdot) } &\leq  	\intO{  S^2(t+, \cdot) } \ \mbox{for all}\ t \geq 0. 
	\nonumber
\end{align}	
We say that a solution $(\tvr, \tvm, \tS)$ is 
$\prec_{DiP}$ maximal in the set $\mathcal{U}[\vr_0, \vm_0, S_0; \mathcal{E}_0]$ if for any other solution $(\vr, \vm, S; \mathcal{E}_0) 
\in \mathcal{U}[\vr_0, \vm_0, S_0; \mathcal{E}_0]$
satisfying 
\[
(\tvr, \tvm, \tS) \prec_{DiP} (\vr, \vm, S)
\]
there holds	
\[
\intO{ \tS(t+, \cdot) } =  	\intO{ S(t+, \cdot) } \ \mbox{for all}\ t > 0.
\]

It is easy to check that all solutions selected in Step 1 belong to the set 
\[
\mathcal{U}_S [\vr_0, \vm_0, S_0; \mathcal{E}_0] = {\rm argmax}\, \mathcal{F}_S \Big[ 
\mathcal{U}(\vr_0, \vm_0, S_0; \mathcal{E}_0) \Big],
\]
are $\prec_{DiP}$ maximal in  $\mathcal{U}[\vr_0, \vm_0, S_0; \mathcal{E}_0]$.

DiPerna \cite{DiP2} conjectured that all 
$\prec_{DiP}$ maximal solutions are in fact weak solutions, meaning 
their energy defect 
\[
\mathcal{D}(\vr, \vm, S)(t) = \mathcal{E}_0 - \intO{ E(\vr, \vm, S)(t) }
\]
vanishes for a.a. $t > 0$. Although the conjecture remains open, we can show that the energy defect of any $\prec_{DiP}$ maximal solution 
vanishes for $t \to \infty$.

To show this, we firstly  we recall the concatenation property of DMV solutions proved in 	
\cite[Lemma 5.1]{FeiLukYu}: 
\begin{Lemma}[{\bf Concatenation of DMV solutions}] \label{Ldv1}
	
	Let $\{ \mathcal{V}^i_{t,x} \}, \mathfrak{C}^i$ be two DMV solutions of the Euler system in $(0, T_i) \times \Omega$, with the initial data
	$(\vr_0^i, \vm_0^i, S^i_0, \mathcal{E}_0)$, $i=1,2$ respectively. Let 
	\[
	\vr^i (t,x) = \left< \mathcal{V}^i_{t,x}, \tvr \right>,\  \vm^i (t,x) = \left< \mathcal{V}^i_{t,x}, \tvm \right>, \
	S^i(t,x) = \left< \mathcal{V}^i_{t,x}, \tS \right> ,\ i=1,2
	\]
	be the associated dissipative solutions.
	Suppose that
	\begin{equation} \label{cp1}
		\vr^1(T_1, \cdot) = \vr^2_0, \ \vm^1(T_1, 0) = \vm^2_0,  	
	\end{equation}
and	
	\begin{equation} \label{cp2}
		S^1(T_1-, \cdot)   \leq S^2_0.
	\end{equation}

Then the concatenated solution $(\{ \mathcal{V}_{t,x} \}, \mathfrak{C}) = (\{\mathcal{V}^1_{t,x}\} ,\mathfrak{C}_1)  \cup_{T_1} (\{ \mathcal{V}^2_{t,x} \}, \mathfrak{C}_2) $ defined as
	\[
	\mathcal{V}_{t,x} = \left\{ \begin{array}{l} \mathcal{V}^1_{t,x} \ \mbox{for}\ 0 \leq t \leq T^1, \\
		\mathcal{V}^2_{t - T^1,x} \ \mbox{for}\ T^1 < t \leq T^1 + T^2 \end{array} \right.	,\
	\mathfrak{C}(t) = \left\{ \begin{array}{l} \mathfrak{C}^1(t) \ \mbox{for}\ 0 \leq t \leq T^1, \\
		\mathfrak{C}^2(t - T^1) \ \mbox{for}\ T^1 < t \leq T^1 + T^2 \end{array} \right.
	\]	
	is a DMV solution of the Euler system in $(0, T_1 + T_2) \times \Omega$, with the initial data $(\vr^1_0, \vm^1_0, S^1_0)$, 
	\and $\mathcal{E}_0$.
	
\end{Lemma}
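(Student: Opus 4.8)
The plan is to verify directly that the concatenated pair $(\{ \mathcal{V}_{t,x} \}, \mathfrak{C})$ satisfies each of the four defining relations of Definition \ref{Dd2} on $(0, T_1 + T_2) \times \Omega$, with initial data $(\vr^1_0, \vm^1_0, S^1_0)$ and total energy $\mathcal{E}_0$. For every admissible test function the idea is the same: split the time integral at $t = T_1$, change variables $s = t - T_1$ in the second slab so that $\tilde\varphi(s,x) = \varphi(s+T_1,x) \in C^1_c([0,T_2) \times \Ov{\Omega})$, and then apply the weak formulation valid for $\mathcal{V}^1$ on $(0,T_1)$ and for $\mathcal{V}^2$ on $(0,T_2)$. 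The only genuine work is to account for the boundary contributions created at the gluing time $T_1$ and to show that the compatibility conditions \eqref{cp1}, \eqref{cp2} make them cancel (mass and momentum) or acquire the correct sign (entropy).

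First I would record the \emph{extended} weak formulations on a finite slab. Because the dissipative solution admits instantaneous values, with $\vr^1, \vm^1$ weakly continuous in time and $S^1$ weakly c\`agl\`ad (property~3 of Section~\ref{d}), the balance laws \eqref{d2} and \eqref{d3} restricted to $(0,T_1)$ remain valid for test functions that do \emph{not} vanish at $t = T_1$, at the cost of the right-endpoint terms $\intO{ \vr^1(T_1,\cdot) \varphi(T_1,\cdot) }$ and $\intO{ \vm^1(T_1,\cdot) \cdot \bfphi(T_1,\cdot) }$; likewise the entropy inequality \eqref{d4} acquires the \emph{left-limit} term $\intO{ S^1(T_1-,\cdot) \varphi(T_1,\cdot) }$. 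These are precisely the boundary terms produced by formally integrating by parts in time on $(0,T_1)$. For the continuity equation, summing the two slabs then yields the first-slab contribution $\intO{ \vr^1(T_1,\cdot)\varphi(T_1,\cdot) } - \intO{ \vr^1_0 \varphi(0,\cdot) }$ together with the second-slab initial term $- \intO{ \vr^2_0 \varphi(T_1,\cdot) }$; since $\vr^1(T_1,\cdot) = \vr^2_0$ by \eqref{cp1}, the two $\varphi(T_1)$-terms cancel and only $-\intO{ \vr^1_0 \varphi(0,\cdot) }$ survives, which is exactly \eqref{d2} for the concatenation. The momentum equation is identical: the concentration term splits as an integral of $\Grad \bfphi$ against $\mathfrak{C}^1$ on $(0,T_1)$ and against $\mathfrak{C}^2(\cdot - T_1)$ on $(T_1,T_1+T_2)$, i.e. against the concatenated $\mathfrak{C}$, while the endpoint terms cancel because $\vm^1(T_1,\cdot) = \vm^2_0$.

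The entropy inequality is the decisive step and the place where \eqref{cp2} enters. For $\varphi \geq 0$ the same splitting produces, from the first slab, the boundary term $\intO{ S^1(T_1-,\cdot)\varphi(T_1,\cdot) }$ (the left limit, since $T_1$ is approached from inside $(0,T_1)$ and $S^1$ is c\`agl\`ad), and from the second slab the term $-\intO{ S^2_0 \varphi(T_1,\cdot) }$. Their sum equals $\intO{ \big( S^1(T_1-,\cdot) - S^2_0 \big) \varphi(T_1,\cdot) }$, which is $\leq 0$ because $\varphi(T_1,\cdot) \geq 0$ and $S^1(T_1-,\cdot) \leq S^2_0$ by \eqref{cp2}. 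Hence these leftover contributions may be discarded while \emph{preserving} the inequality, leaving precisely \eqref{d4} for the concatenation with initial entropy $S^1_0$. This is the point I expect to be most delicate: the entropy balance is only an inequality and $S$ is merely c\`agl\`ad, so one must identify the correct one-sided limit and control the sign of the surviving boundary term. This is exactly the role played by the ordering $S^1(T_1-,\cdot) \leq S^2_0$, in contrast to the \emph{equalities} required of $\vr$ and $\vm$ in \eqref{cp1}.

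Finally, the energy compatibility \eqref{d5} is inherited because the two building blocks carry the \emph{same} total energy $\mathcal{E}_0$. On $(0,T_1)$ it is \eqref{d5} for $\mathcal{V}^1$ verbatim; on $(T_1, T_1+T_2)$, after the shift, it is \eqref{d5} for $\mathcal{V}^2$. Consequently, for a.a. $t$ the operative dissipation bound $\mathcal{E}_0 \geq \intO{ \YM{ E(\tvr,\tvm,\tS) } } + r(d,\gamma) \int_{\Ov{\Omega}} \D\, {\rm trace}[\mathfrak{C}](t,x)$ holds on each subinterval with the common $\mathcal{E}_0$, while $\mathcal{E}_0 \geq \intO{ E(\vr^1_0,\vm^1_0,S^1_0) }$ is part of the standing assumption on the data of $\mathcal{V}^1$; the intermediate energy bound for the second slab is controlled through weak lower semicontinuity of the convex energy together with the matching \eqref{cp1}. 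Collecting the four verifications shows that $(\{\mathcal{V}_{t,x}\}, \mathfrak{C})$ is a DMV solution on $(0,T_1+T_2)\times\Omega$ with data $(\vr^1_0,\vm^1_0,S^1_0)$ and energy $\mathcal{E}_0$, as claimed.
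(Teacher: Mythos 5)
The paper contains no proof of this lemma: it is recalled verbatim from \cite[Lemma 5.1]{FeiLukYu}, so the only comparison possible is against the standard argument that citation stands for — which is exactly the argument you give. Your core verification is correct: the extended slab formulations with right-endpoint terms are legitimate consequences of the instantaneous-value property (weak continuity of $\vr^1,\vm^1$, the c\`agl\`ad structure of $S^1$, obtained by testing with $\varphi\,\psi_\delta$ for a cutoff $\psi_\delta$ collapsing at $T_1$ and passing $\delta\to0$); the endpoint terms cancel via \eqref{cp1} for mass and momentum; and the surviving entropy term $\intO{\big(S^1(T_1-,\cdot)-S^2_0\big)\varphi(T_1,\cdot)}\leq 0$ is discarded using \eqref{cp2} and $\varphi\geq 0$, which is indeed the decisive sign argument.

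However, one claim in your treatment of the energy condition \eqref{d5} is false and should be deleted: you assert that ``the intermediate energy bound for the second slab is controlled through weak lower semicontinuity of the convex energy together with the matching \eqref{cp1}'', i.e.\ that $\intO{E(\vr^1_0,\vm^1_0,S^1_0)}$ dominates $\intO{\YM{E(\tvr,\tvm,\tS)}}+r(d,\gamma)\int_{\Ov{\Omega}}\D\,{\rm trace}[\mathfrak{C}](t,x)$ also for $t>T_1$. No such bound holds in general. By \eqref{r1} the energy $\vr e(\vr,S)$ is strictly increasing in $S$, and \eqref{cp2} permits $S^2_0$ to be strictly larger than $S^1(T_1-,\cdot)$; hence the mean energy of the second data can strictly exceed that of the first — this is precisely the situation engineered in Proposition \ref{saP1}, where the second block is started from data whose mean energy equals the full $\mathcal{E}_0$, strictly above $\intO{E(\vr^1_0,\vm^1_0,S^1_0)}$ whenever the defect is positive. (Weak lower semicontinuity would in any case give an inequality in the wrong direction.) The correct reading — which your preceding sentence already contains — is that the operative inequality in \eqref{d5} is the bound against the \emph{common} total energy: each block satisfies $\intO{\YM{E(\tvr,\tvm,\tS)}}+r(d,\gamma)\int_{\Ov{\Omega}}\D\,{\rm trace}[\mathfrak{C}](t,x)\leq\mathcal{E}_0$ for a.a.\ $t$ in its own slab, while $\mathcal{E}_0\geq\intO{E(\vr^1_0,\vm^1_0,S^1_0)}$ is part of the data assumption for the first block alone. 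That is all the concatenated solution needs, and all it can have; with the erroneous sentence removed, your proof is complete.
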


We are now ready to prove that the defect of a maximal solution vanishes for large times. 

\begin{Proposition} \label{saP1}
	
Let $(\vr, \vm, S)$ be a $\prec_{DiP}$ maximal solution in the solution set $\mathcal{U}[\vr_0, \vm_0, S_0; \mathcal{E}_0]$. 

Then 
\[
\mathcal{D}(\vr, \vm, S)(t+) = \mathcal{E}_0 - \intO{ E(\vr, \vm, S)(t+, \cdot) } 
\to 0 \ \mbox{as}\ t \to \infty.
\]

\end{Proposition}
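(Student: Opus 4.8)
The plan is to argue by contradiction, exploiting the $\prec_{DiP}$ maximality of $(\vr,\vm,S)$ together with the concatenation property of Lemma \ref{Ldv1}. First I would record that, by the Second Law (the total entropy $t \mapsto \intO{S(t+)}$ is non-decreasing) and by the a priori bound forcing the total entropy below the equilibrium value $S_{\max}$ associated with the conserved total mass $M = \intO{\vr_0}$ and the total energy $\mathcal{E}_0$, the limit $\Sigma_\infty = \lim_{t\to\infty} \intO{S(t+)}$ exists and is finite. Since $\mathcal{D}\geq 0$, it suffices to rule out that $\mathcal{D}(t_n+)\geq \delta$ along some sequence $t_n\to\infty$ and some $\delta>0$.

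Fix such a $t_n =: T$ with $\mathcal{D}(T+)\geq\delta$. The state reached at $T$ is $(\vr(T),\vm(T),S(T+))$ with mean energy $\intO{E(\vr,\vm,S)(T+)} = \mathcal{E}_0 - \mathcal{D}(T+) \leq \mathcal{E}_0 - \delta$. Because $E$ is strictly increasing in the entropy variable ($\partial_S(\vr e)=\vt>0$, see \eqref{r1}), I can raise the entropy pointwise to some $\what{S}\geq S(T+)$ so that the defect-free (Dirac) state $(\vr(T),\vm(T),\what{S})$ still obeys the compatibility $\intO{E(\vr(T),\vm(T),\what{S})}\leq \mathcal{E}_0$, while carrying strictly more total entropy, $\intO{\what{S}} = \intO{S(T+)} + g$ with $g>0$. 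By the global existence stated in Section \ref{d} there is a DMV solution emanating from $(\vr(T),\vm(T),\what{S})$ with the same energy budget $\mathcal{E}_0$, and its total entropy is non-decreasing, hence stays $\geq \intO{\what{S}}$. Since $\what{S}\geq S(T+)\geq S(T-)$ and the densities and momenta match, the junction conditions \eqref{cp1}, \eqref{cp2} hold, so Lemma \ref{Ldv1} concatenates the original solution on $[0,T]$ with this restart into a genuine competitor $(\vr',\vm',S')\in\mathcal{U}[\vr_0,\vm_0,S_0;\mathcal{E}_0]$.

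It then remains to compare entropies. For $t<T$ the two solutions coincide, while for $t\geq T$ one has $\intO{S'(t+)}\geq \intO{\what{S}} = \intO{S(T+)} + g$. If $g$ can be bounded below by a constant $c>0$ independent of $T$, then choosing $T=t_n$ so large that $\Sigma_\infty - \intO{S(t_n+)} < c$ gives $\intO{\what{S}} = \intO{S(t_n+)} + g > (\Sigma_\infty - c) + c = \Sigma_\infty$, hence $\intO{S'(t+)} > \Sigma_\infty \geq \intO{S(t+)}$ for all $t\geq T$, with equality for $t<T$. Thus $(\vr,\vm,S)\prec_{DiP}(\vr',\vm',S')$ with a strict inequality of total entropies at $t=T$, contradicting the $\prec_{DiP}$ maximality of $(\vr,\vm,S)$. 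Therefore $\mathcal{D}(t+)\to 0$.

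I expect the one genuinely delicate point to be the time-uniform lower bound $g\geq c>0$ on the entropy producible from the defect $\delta$. The naive estimate $g\gtrsim \delta/\sup\vt$ is useless if the temperature is unbounded. The remedy is to deposit the energy only in a \emph{cool} part of the domain: from $\intO{p} = (\gamma-1)\intO{\vr e} \leq (\gamma-1)\mathcal{E}_0$ and $\intO{\vr}=M$, a Chebyshev argument shows that for a threshold $\vt_*$ depending only on the data the set $\{\vt\leq \vt_*\}$ carries at least mass $M/2$ at \emph{every} time $T$ (the high-temperature region can hold only a vanishing fraction of the fixed mass budget). Depositing a fixed amount $\delta/2 \leq \mathcal{D}(T+)$ of energy there, by raising $\what{S}$ on that set, produces entropy $g\geq c(\data,\delta)>0$ uniformly in $T$; this keeps $\what S \geq S(T+)\geq \underline{s}\,\vr(T)$, so the restart datum still lies in the admissible class \eqref{dclass}. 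Everything else — existence of the restart, validity of the junction conditions \eqref{cp1}--\eqref{cp2}, and monotonicity of the total entropy — is routine given the properties collected in Section \ref{d}; the quantitative, time-uniform thermalization estimate is the crux.
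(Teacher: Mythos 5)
Your proof is correct, and its skeleton coincides with the paper's: argue by contradiction along a sequence $\tau_n \to \infty$ with $\mathcal{D}(\tau_n+) \geq \delta$, restart at time $\tau_n$ from the instantaneous state with the entropy raised so as to refill the energy defect, glue via the concatenation Lemma \ref{Ldv1}, and contradict $\prec_{DiP}$ maximality using convergence of the non--decreasing total entropy to a finite limit $\mathcal{S}_\infty$; everything then hinges on an entropy gain bounded below \emph{uniformly in} $\tau_n$. Where you differ from the paper is exactly at this crux. The paper rescales the temperature multiplicatively on the whole domain, $\tvt_0 = (1+\lambda)\vt(\tau-,\cdot)$ with $\lambda\, c_v \intO{\vr \vt(\tau-,\cdot)} = \delta$; since $c_v \intO{\vr \vt} \leq \mathcal{E}_0$, this forces $\lambda \geq \delta/\mathcal{E}_0$, and the pointwise entropy gain $\vr c_v \log(1+\lambda)$ integrates to at least $c_v M_0 \log(1 + \delta/\mathcal{E}_0)$ --- uniform in time, with no case analysis on where the temperature is large. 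You instead deposit the energy additively, but only on a cool set $\{\vt \leq \vt_*\}$ of mass at least $M_0/2$, obtained by a Chebyshev argument from the conserved mass and the bound $c_v \intO{\vr\vt} \leq \mathcal{E}_0$. Both devices resolve the high--temperature obstruction you correctly identified (a global additive deposit would only give $g \gtrsim \delta/\sup \vt$): the paper's multiplicative scaling is the shorter route, because the logarithmic form of the entropy makes the gain per unit mass independent of the local temperature, whereas your localization is somewhat heavier but equally valid. One small point in your favour: you justify finiteness of $\mathcal{S}_\infty$ explicitly through the equilibrium entropy bound, which the paper uses implicitly here and only records later, in Section \ref{O}.
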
	

\begin{proof}
Suppose that 
\begin{equation} \label{pom1}
\mathcal{D}(\vr, \vm, S)(\tau+) = \mathcal{E}_0 - \intO{ E(\vr, \vm, S)(\tau+, \cdot) } = \delta > 0
\end{equation}
Our goal is to concatenate the solution with another one, starting from 
\[
\tvr_0 = \vr(\tau, \cdot),\ \tvm_0 = \vm(\tau, \cdot), 
\]
and 
\[
\tS_0 > S(\tau-, \cdot), 
\]
where $\tS_0$ can be chosen large satisfying 
\begin{equation} \label{pom2}
\intO{ E (\tvr_0, \tvm_0, \tS_0 ) } = \mathcal{E}_0. 
\end{equation}

To this end, we first identify the temperature $\vt(\tau-, \cdot)$ 
from the relation
\[
S(\tau-, \cdot) = \vr(\tau, \cdot) \Big( c_v \log (\vt(\tau-, \cdot)) - 
\log(\vr(\tau,\cdot)) \Big)\ \mbox{on the set}\ \{ \vr(\tau, \cdot) > 0\}.
\]
The total energy evaluated at the same time $\tau$ reads
\[
\intO{ \left[ \frac{1}{2} \mathds{1}_{\vr > 0} \frac{|\vm|^2}{\vr}
(\tau, \cdot) + c_v \vr \vt (\tau-, \cdot) \right]	} = \mathcal{E}_0 - 
\delta.
\]
Thus to keep \eqref{pom2} valid, we may choose 
\[
\tvt_0 = (1 +\lambda) \vt(\tau-, \cdot) , \ \mbox{where}\ 
\lambda c_v \intO{ \vr \vt(\tau-, \cdot) } = \delta,  
\]
meaning 
\begin{equation} \label{pom3}
\lambda = \delta c_v^{-1}  \left( \intO{ \vr \vt(\tau-, \cdot) } \right)^{-1} \geq \frac{\delta }{\mathcal{E}_0}.
\end{equation}

Computing the associated entropy, we get 
\begin{align}
\tS_0 &= \vr(\tau, \cdot) \Big[ c_v \log(\tvt_0) - \log(\vr(\tau, \cdot))\Big] = S(\tau-, \cdot) + \vr(\tau, \cdot) c_v \log \left( 1 + \lambda \right)\br  &\geq S(\tau-, \cdot) + \vr(\tau, \cdot) c_v \log \left( 1 + \frac{\delta}{\mathcal{E}_0} \right).
\nonumber
\end{align}
Consequently, 
\begin{align}
\intO{ \tS_0 } &\geq \intO{ S(\tau-, \cdot) } + c_v \log \left( 1 + \frac{\delta}{\mathcal{E}} \right) \intO{\vr (\tau, \cdot) } \br &= 
\intO{ S(\tau-, \cdot) } + c_v \log \left( 1 + \frac{\delta}{\mathcal{E}_0} \right) M_0,
\label{pom4}
\end{align}
where 
\[
M_0 = \intO{\vr_0 }
\]
denotes the total mass of the fluid.

Finally, suppose $(\vr, \vm, S)$ is $\prec_{DiP}$ maximal. In addition, suppose there is a sequence $\tau_n \to \infty$ such that \eqref{pom1} holds for $\tau = \tau_n$ and a fixed $\delta > 0$. Clearly, this leads to a contradiction with the maximality condition. Indeed we have 
\[
\intO{ {S}(t, \cdot) } \to \mathcal{S}_\infty \ \mbox{as}\ 
t \to \infty.
\]
Using \eqref{pom4} and the concatenation argument we construct a new DMV solution $(\tvr, \tvm, \tS)$ such that 
\[
(\tvr, \tvm, \tS) = (\vr, \vm, S) \ \mbox{for all}\ t \in [0, \tau_n), 
\]
and 
\[
\intO{ \tS(\tau_n+, \cdot) } \geq \intO{ S(\tau_n - , \cdot)} 
+ c_v \log \left( 1 + \frac{\delta}{\mathcal{E}_0} \right) M_0 > \mathcal{S}_\infty
\]
if $\tau_n$ is large enough in contrast with $\prec_{DiP}$ maximality of 
$(\vr, \vm, S)$. Consequently, a maximal solution satisfied
$\mathcal{D}(\varrho, \vm, S) \to 0, $ what we wanted to prove.
	
\end{proof}

Finally, we introduce the class of absolute entropy maximisers. 

\begin{Definition} [{\bf Absolute entropy maximiser}] \label{caD1}
Let $(\tvr,\tvm, \tS)$ be a dissipative solution belonging to the solution set $\mathcal{U}[\vr_0, \vm_0, S_0; \mathcal{E}_0]$. 
We say that $(\tvr, \tvm, \tS)$ is \emph{absolute entropy miximiser} if 
\[
(\vr, \vm, S) \prec_{DiP} (\tvr, \tvm, \tS)
\]	
for all $(\vr, \vm, S) \in \mathcal{U}[\vr_0, \vm_0, S_0; \mathcal{E}_0]$.

\end{Definition}	

As shown in \cite[Theorem 3.2]{FeiLukYu}, any absolute entropy maximiser 
is necessarily a weak solution of the Euler system, more specifically $\mathcal{D}(t) = 0$. 
Moreover, repeating the arguments of \cite[Section 5]{FeiJuLu} we easily 
observe that any set $\mathcal{U}[\vr_0, \vm_0, S_0; \mathcal{E}_0]$ contains at most one maximal entropy maximiser. 

Let us summarise the properties of the solutions selected in Step 1.

\begin{Theorem}[{\bf Properties of solutions selected in Step 1}] \label{caT1}
	
Let 
\[
\mathcal{U}_S[\vr_0, \vm_0, S_0; \mathcal{E}_0] = {\rm argmax}\, \mathcal{F}_S
\Big[ \mathcal{U}[\vr_0, \vm_0, S_0; \mathcal{E}_0] \Big].
\]
	
Then the following holds:

\begin{itemize}
	\item For any $(\vr_0, \vm_0, S_0; \mathcal{E}_0) \in X_D$, the set  
$\mathcal{U}_S[\vr_0, \vm_0, S_0; \mathcal{E}_0]$ is a non--empty bounded closed convex subset of the Banach space $L^q_\omega(0, \infty; L^q(\Omega; R^{d+2}))$, $1 \leq q \leq 
\frac{2 \gamma}{\gamma +1}$.	 
\item The mapping 
\[
\mathcal{U}_S: (\vr_0, \vm_0, S_0; \mathcal{E}_0) \in X_D \mapsto 
\mathcal{U}_S[ \vr_0, \vm_0, S_0; 
\mathcal{E}_0] \in {\rm closed}[L^q_\omega(0,\infty; L^q(\Omega; R^{d+2})]
\]
is Borel--measurable with respect to the Wijsman topology on 
${\rm closed}[L^q_\omega(0, \infty; L^q(\Omega; R^{d+2})]$.

\item The energy defect vanishes for large times:
\[
\mathcal{D}(\vr, \vm, S)(t\pm) \to 0 \ \mbox{as}\ t \to \infty
\]
for any $(\vr, \vm, S) \in \mathcal{U}_S[\vr_0, \vm_0, S_0; \mathcal{E}_0]$.
\item If, in addition, the set $\mathcal{U}_S[\vr_0, \vm_0, S_0; \mathcal{E}_0]$ 
contains an absolute entropy maximiser $(\tvr, \tvm, \tS)$, then 
$(\tvr, \tvm, \tS)$ is a weak solution of the Euler system, the solution set
$\mathcal{U}_S[\vr_0, \vm_0, S_0; \mathcal{E}_0]$ is a singleton, and 
\[
\mathcal{U}_S[\vr_0, \vm_0, S_0; \mathcal{E}_0 ] = \{(\tvr, \tvm, \tS)   \}.
\]

\end{itemize}

\end{Theorem}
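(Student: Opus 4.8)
The plan is to dispatch the four assertions in turn, relying on the structural properties of $\mathcal{U}[\vr_0, \vm_0, S_0; \mathcal{E}_0]$ collected in Section \ref{d}, on the Castaing construction carried out just before the statement, and on Proposition \ref{saP1}. For the first assertion I would observe that $\mathcal{U}_S$ is the maximiser set of the bounded linear functional $\mathcal{F}_S$. Since $\mathcal{U}[\vr_0, \vm_0, S_0; \mathcal{E}_0]$ is non-empty and compact in $L^q_\omega(0,\infty; W^{-\ell,2}(\Omega; R^{d+2}))$, while $\mathcal{F}_S$ — being the integral of the continuous pairing $\langle S, 1\rangle$ against the weight $\exp(-t)$ — is continuous in that topology, the supremum is attained, so $\mathcal{U}_S$ is non-empty; boundedness is inherited from $\mathcal{U}[\vr_0, \vm_0, S_0; \mathcal{E}_0]$. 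Convexity follows from linearity of $\mathcal{F}_S$ together with convexity of $\mathcal{U}$, and closedness in $L^q_\omega$ follows because $\mathcal{U}_S$ is the intersection of the closed set $\mathcal{U}$ with the closed level set $\{\mathcal{F}_S = \max \mathcal{F}_S\}$ of the continuous functional $\mathcal{F}_S$.

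For the measurability assertion I would simply invoke the construction preceding the theorem: the Castaing representation of $\mathcal{U}_S$, viewed in the compact space $L^2_\omega(0,\infty; W^{-\ell,2})$ and enriched by rational convex combinations, is also a Castaing representation in $L^q_\omega(0,\infty; L^q)$, since weak and strong closures coincide on the convex set $\mathcal{U}_S$. Borel measurability into ${\rm closed}[L^q_\omega]$ endowed with the Wijsman topology is then precisely the content of Remark \ref{Rac1} via Hess's theorem, so no further work is needed here.

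The third assertion is where the genuine content lies. I would first show that every entropy maximiser is $\prec_{DiP}$ maximal: if $(\vr, \vm, S) \in \mathcal{U}_S$ and $(\vr, \vm, S) \prec_{DiP} (\vr', \vm', S')$ for a competitor in $\mathcal{U}[\vr_0, \vm_0, S_0; \mathcal{E}_0]$, then $\intO{S'(t+)} \geq \intO{S(t+)}$ for every $t$, whereas maximality of $\mathcal{F}_S$ forces $\int_0^\infty \exp(-t) \intO{S'(t+)} \dt \leq \int_0^\infty \exp(-t) \intO{S(t+)} \dt$; combining the pointwise inequality with the reverse integral inequality against the strictly positive weight $\exp(-t)$ yields equality of the total entropies for a.a. $t$, hence for all $t$ by monotonicity and the c\`agl\`ad regularity recorded in \eqref{dd2}, which is exactly $\prec_{DiP}$ maximality. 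Proposition \ref{saP1} then gives $\mathcal{D}(\vr, \vm, S)(t\pm) \to 0$. The main obstacle is precisely this matching of the integrated functional $\mathcal{F}_S$ to the pointwise DiPerna ordering; it is the step reconciling the ``opposite directions'' of the two selection criteria.

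For the last assertion, suppose $(\tvr, \tvm, \tS)$ is an absolute entropy maximiser in $\mathcal{U}_S$. By \cite[Theorem 3.2]{FeiLukYu} it is a weak solution, so $\mathcal{D} \equiv 0$. As it $\prec_{DiP}$-dominates every element of $\mathcal{U}[\vr_0, \vm_0, S_0; \mathcal{E}_0]$ (Definition \ref{caD1}), it has the largest total entropy at each time and hence lies in $\mathcal{U}_S$. The key observation is that \emph{any} $(\vr, \vm, S) \in \mathcal{U}_S$ is then itself an absolute maximiser: it is $\prec_{DiP}$ maximal by the third step and is dominated by $(\tvr, \tvm, \tS)$, so it shares the same total entropy profile, $\intO{S(t+)} = \intO{\tS(t+)}$; consequently, for every competitor $(\vr'', \vm'', S'')$ one has $\intO{S''(t+)} \leq \intO{\tS(t+)} = \intO{S(t+)}$, i.e. $(\vr'', \vm'', S'') \prec_{DiP} (\vr, \vm, S)$. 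Since $\mathcal{U}[\vr_0, \vm_0, S_0; \mathcal{E}_0]$ contains at most one absolute entropy maximiser, as in \cite[Section 5]{FeiJuLu}, it follows that $(\vr, \vm, S) = (\tvr, \tvm, \tS)$, whence $\mathcal{U}_S[\vr_0, \vm_0, S_0; \mathcal{E}_0] = \{(\tvr, \tvm, \tS)\}$ and its unique element is a weak solution.
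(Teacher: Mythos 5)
Your proposal is correct and takes essentially the same route as the paper: this theorem summarises Section \ref{step1}, and you assemble exactly the paper's ingredients — convexity/compactness of $\mathcal{U}[\vr_0,\vm_0,S_0;\mathcal{E}_0]$ with linearity and continuity of $\mathcal{F}_S$ for the first claim, the Castaing representation plus Hess's theorem (Remark \ref{Rac1}) for Wijsman--Borel measurability, the observation that $\mathcal{F}_S$-maximisers are $\prec_{DiP}$ maximal combined with Proposition \ref{saP1} for the vanishing defect, and \cite{FeiLukYu}, \cite{FeiJuLu} for the singleton claim. Your explicit verification of the $\prec_{DiP}$ maximality of entropy maximisers (strictly positive weight, a.e.\ equality of the monotone total entropies, then equality of right limits everywhere) correctly fills in the step the paper dismisses as ``easy to check.''
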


\section{Step 2 - turbulent solutions}
\label{ds}

If the set $\mathcal{U}_S[\vr_0, \vm_0, S_0; \mathcal{E}_0]$ fails to be a singleton, 
we propose the second selection based on maximisation of the energy defect (turbulent energy). Specifically, we consider the functional
\[
\mathcal{F}_E(\vr, \vm, S) = \int_0^\infty \exp(-t) \left( \intO{ 
E(\vr, \vm, S) (t, \cdot)}	\right) \dt.
\]
We close the selection procedure by setting 
\[
\mathcal{U}_{ES}(\vr_0, \vm_0, S_0; \mathcal{E}_0) = 
{\rm argmin}\, \mathcal{F}_E \Big[ \mathcal{U}_S[\vr_0, \vm_0, S_0; \mathcal{E}_0] \Big]. 
\]
As the energy functional $E$ is strictly convex in its domain, 
$\mathcal{U}_{ES}(\vr_0, \vm_0, S_0; \mathcal{E}_0)$ is a singleton, which finishes the selection process.

\subsection{Measurability of the second selection}

Our next objective is to show the Borel measurability of the mapping
associated with the second selection, 
\begin{align} 
\mathcal{U}_{ES} : (\vr_0, \vm_0, S_0; \mathcal{E}_0) \in X_D \mapsto 
{\rm argmin}\, \mathcal{F}_E \left[ \mathcal{U}_S[\vr_0, \vm_0, S_0; \mathcal{E}_0] \Big]       \right]	
\in L^q_{\omega} ((0, \infty )L^q(\Omega; R^{d+2}) ).
\label{ca6}
\end{align}
	
We start by introducing a Moreau--Yosida regularisation of the 
functional $\mathcal{F}_E$ on the Banach space 
$L^q_\omega(0, \infty; L^q(\Omega; R^{d+2}))$ proposed by 
Bacho \cite{Bacho}:
\begin{align} 
	\mathcal{F}^\ep_E &(\vr, \vm, S ) \br &= 
	\inf_{(\tvr, \tvm, \widetilde{S}) \in L^q_{\omega}((0, \infty); L^q(\Omega; R^{d+2}))  } \left[ \frac{1}{\ep} 
	\| (\vr, \vm, S ) - (\tvr, \tvm, \widetilde{S}) \|_{L^q_{\omega}((0, \infty); L^q(\Omega; R^{d+2})  }^q + 
	\mathcal{F}_E (\tvr, \tvm, \widetilde{S})) \right]
\label{ca9}	
\end{align}
for $\ep > 0$. 
Next, we show the Borel measurability of the mapping 
\begin{align} 
	\mathcal{U}_{ES}^\ep : (\vr_0, \vm_0, S_0; \mathcal{E}_0) \in X_D \mapsto 
	{\rm argmin} \mathcal{F}_E^\ep \Big[ \mathcal{U}_S [\vr_0, \vm_0, S_0; \mathcal{E}_0]       \Big]	
	\in L^q_{\omega} ((0, \infty )L^q(\Omega; R^{d+2}) ).
	\label{caa6}
\end{align}
In accordance with Remark \ref{Rac1}, the Borel measurability claimed in
\eqref{caa6} will follow as long as we show continuity 
of the mapping 
\[
\mathcal{A} \in {\rm closed}[L^q_{\omega} ((0, \infty )L^q(\Omega; R^{d+2}) ) ] \to {\rm argmin}\, \mathcal{F}^\ep_E [\mathcal{A}] 
\]
on the family of closed convex sets in $L^q_{\omega} ((0, \infty )L^q(\Omega; R^{d+2}) )$
with respect to the Wijsman topology.
The space $L^q_\omega (0, \infty; L^q(\Omega; R^{d+2}))$, $1 < q < \infty,$
being a reflexive, separable, uniformly convex Banach space with a uniformly convex dual, the convergence in the Wijsman topology is equivalent to convergence in the Mosco topology $\mathcal{M}$, see Beer \cite[Sonntag-Attouch Theorem in Section 4]{Beer1}.
We recall that
\[
\mathcal{A}_n \toMo \mathcal{A}
\]
if the two following properties hold:
\begin{itemize}
	\item For any $(\vr, \vm, S) \in \mathcal{A}$, there exists
	$(\vr_n, \vm_n, S_n) \in \mathcal{A}_n$ such that
	\[
	(\vr_n, \vm_n, S_n) \to (\vr, \vm, S) \ \mbox{(strongly) in}\ L^q_\omega (0, \infty; L^q(\Omega; R^{d+2})).
	\]	
	\item If
	\[
	(\vr_{n_k}, \vm_{n_k}, S_{n_k}) \in \mathcal{A}_{n_k} \to (\vr, \vm, S)
	\ \mbox{weakly in}\ L^q_\omega (0, \infty; L^q(\Omega; R^{d+2}))
	\]
	then
	\[
	(\vr, \vm, S) \in \mathcal{A}.
	\]
\end{itemize}	

We claim the following result. 
\begin{Lemma} \label{caL2}
	Let $(\mathcal{A}_n)_{n=1}^\infty$ be a sequence of bounded 
	closed convex sets in 
	$L^q_\omega(0,\infty; L^q(\Omega; R^{d+2}))$
	such that 
	\[
	\mathcal{A}_n \toMo \mathcal{A}, 
	\]
	where $\mathcal{A}$ is a bounded closed convex. 
	
	Then 
	\[
	{\rm argmin}\, \mathcal{F}^\ep_E [\mathcal{A}_n] \to 
	{\rm argmin}\, \mathcal{F}^\ep_E [\mathcal{A}] 
	\ \mbox{in}\ L^q_\omega(0,\infty; L^q(\Omega; R^{d+2})) \ \mbox{as}\ 
	n \to \infty. 
	\] 	
	
\end{Lemma}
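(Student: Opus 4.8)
The plan is to recast the constrained minimisation as a joint (inf-convolution) problem, prove existence and uniqueness of the minimisers, obtain weak convergence through the fundamental theorem of $\Gamma$-convergence, and finally upgrade to strong convergence by exploiting separately the uniform convexity of the space and the strict convexity of the energy. Write $X = L^q_\omega(0,\infty; L^q(\Omega; R^{d+2}))$, $1 < q < \infty$, which is reflexive, separable and uniformly convex with uniformly convex dual, and recall that $\dd\omega = \exp(-t)\dt \times \dx$ is a finite measure. Since $\mathcal{F}^\ep_E(u) = \inf_{v \in X}[\frac1\ep\|u-v\|_X^q + \mathcal{F}_E(v)]$, minimising $\mathcal{F}^\ep_E$ over a closed convex set $\mathcal{A}$ is the joint minimisation over $(u,v) \in \mathcal{A} \times X$ of
\[
G(u,v) = \tfrac1\ep\|u - v\|_X^q + \mathcal{F}_E(v).
\]
I would first record that $G$ is \emph{jointly} strictly convex: along any segment, either the $v$-endpoints differ, where strict convexity of $\mathcal{F}_E$ (thermodynamic stability) forces strictness, or they coincide and the $u$-endpoints differ, where strict convexity of $u \mapsto \|u-v\|_X^q$ (uniform convexity of $X$, $q>1$) forces strictness. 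Existence of a minimiser follows from coercivity in $v$ (for $u$ in the bounded set $\mathcal{A}$ one has $G \geq \frac1\ep\|u-v\|^q \to \infty$), weak lower semicontinuity, and reflexivity; joint strict convexity yields uniqueness. Hence $\mathrm{argmin}\,\mathcal{F}^\ep_E[\mathcal{A}_n]$ and $\mathrm{argmin}\,\mathcal{F}^\ep_E[\mathcal{A}]$ are single points $u_n$, $u$, realised by unique pairs $(u_n,v_n)$, $(u,v)$.

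Next I would establish Mosco convergence of the penalised functionals $F_n = \mathcal{F}^\ep_E + \chi_{\mathcal{A}_n}$ and $F = \mathcal{F}^\ep_E + \chi_{\mathcal{A}}$, $\chi$ being the convex indicator. The regularisation $\mathcal{F}^\ep_E$ is finite on all of $X$, convex, hence locally Lipschitz and weakly lower semicontinuous. Since $\mathcal{A}_n \toMo \mathcal{A}$ is exactly the Mosco convergence of the indicators, adding the continuous convex perturbation $\mathcal{F}^\ep_E$ preserves it: for the liminf inequality, if $w_n \rightharpoonup w$, weak lower semicontinuity of $\mathcal{F}^\ep_E$ together with the weak-liminf half of the Mosco convergence of $\mathcal{A}_n$ gives $\liminf F_n(w_n) \geq F(w)$; for the recovery inequality, given $w \in \mathcal{A}$ one picks $w_n \in \mathcal{A}_n$ with $w_n \to w$ (the strong-recovery half) and uses strong continuity of $\mathcal{F}^\ep_E$. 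The minimisers $u_n$ lie in the (uniformly) bounded sets $\mathcal{A}_n$ — as holds for the solution sets $\mathcal{U}_S$, which sit in a fixed ball — so $(u_n)$ is bounded and weakly relatively compact, and likewise $(v_n)$ via $\|u_n-v_n\|^q \leq \ep F_n(u_n) \leq C$. By the fundamental theorem of $\Gamma$-convergence every weak cluster point of $(u_n,v_n)$ minimises the limit; by uniqueness it equals $(u,v)$, so $u_n \rightharpoonup u$, $v_n \rightharpoonup v$, and the minimal values converge, $G(u_n,v_n) \to G(u,v)$.

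The crux is then the passage from weak to strong convergence. As $G(u_n,v_n) = \frac1\ep\|u_n-v_n\|_X^q + \mathcal{F}_E(v_n)$ is a sum of two nonnegative weakly lower semicontinuous terms whose total converges to the sum of the respective weak-liminf values, each term must converge to its limit, i.e. $\|u_n-v_n\|_X^q \to \|u-v\|_X^q$ and $\mathcal{F}_E(v_n) \to \mathcal{F}_E(v)$. For the first, combining $u_n-v_n \rightharpoonup u-v$ with convergence of the norms and the Radon--Riesz (Kadec--Klee) property of the uniformly convex space $X$ gives $u_n-v_n \to u-v$ strongly. For the second I would invoke Visintin's theorem: on the finite measure space $((0,\infty)\times\Omega,\omega)$ the energy integrand $E$ is strictly convex on its domain, $v_n \rightharpoonup v$ in $L^1_\omega$, and $\int E(v_n)\ \dd\omega \to \int E(v)\ \dd\omega$, whence $v_n \to v$ in measure; the uniform higher integrability provided by the energy bound \eqref{expe1} then promotes this to strong convergence in $X$. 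Adding, $u_n = (u_n-v_n) + v_n \to u$ strongly in $X$, which is the assertion.

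I expect this last upgrade to be the main obstacle. The $\Gamma$-convergence machinery only delivers weak convergence of the minimisers, and $\mathcal{F}^\ep_E$ is merely strictly — not uniformly — convex, while the Moreau--Yosida kernel $\|u-v\|^q$ is flat in the diagonal direction $u=v$; thus one cannot extract strong convergence of $u_n$ from a single convexity modulus. The resolution is to decouple the two mechanisms: uniform convexity of $X$ controls the off-diagonal part $u_n-v_n$, whereas strict convexity of the energy, via Visintin's theorem, controls the remaining part $v_n$. The role of the hypothesis that $X$ is uniformly convex with uniformly convex dual is precisely to make the first mechanism available, and the thermodynamic stability (strict convexity of $E$) is what makes the second available.
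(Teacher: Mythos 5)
Your proposal is correct in substance and, up to the weak-convergence stage, coincides with the paper's own argument: the paper likewise takes a Mosco recovery sequence and uses continuity of $\mathcal{F}^\ep_E$ to bound the minimal values from above, then uses the weak half of Mosco convergence plus weak lower semicontinuity to identify every weak cluster point of the minimizers with the unique minimizer over $\mathcal{A}$, and concludes convergence of the values --- your appeal to the fundamental theorem of $\Gamma$-convergence is exactly this argument packaged abstractly. Where you genuinely depart from the paper is the weak-to-strong upgrade. The paper disposes of it in one sentence (``as $\mathcal{F}^\ep_E$ is strictly convex, we obtain the desired strong convergence''), which, as you correctly observe, is not a valid general principle: strict convexity without a modulus does not convert weak convergence plus convergence of values into norm convergence (consider $F(x)=\sum_k 2^{-k}x_k^2$ on $\ell^2$ with $x_n$ the standard basis vectors). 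Your decomposition through the proximal pairs $(u_n,v_n)$ --- Kadec--Klee for the off-diagonal part $u_n-v_n$, Visintin's theorem for $v_n$ --- is the honest way to justify the step, and it makes explicit that both the uniform convexity of $L^q_\omega$ and thermodynamic stability are genuinely used; this is what the paper's one-liner tacitly relies on. Two caveats temper this. First, your promotion of Visintin's in-measure convergence of $v_n$ to strong $L^q_\omega$ convergence invokes the energy bounds \eqref{expe1}, i.e.\ it assumes the $\mathcal{A}_n$ consist of dissipative solutions, whereas the lemma is stated for abstract bounded closed convex sets; for the abstract statement you still need equi-integrability of $|v_n|^q$ from somewhere (note $q$ may equal the endpoint $\frac{2\gamma}{\gamma+1}$), e.g.\ via Vitali's theorem once $E(v_n)\to E(v)$ in $L^1_\omega$ is established. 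Second, both your argument and the paper's rest on the standing assumption that $E$ is strictly convex on its domain, which in fact degenerates on the vacuum ray $\{\tvr=0,\ \tvm=0,\ \widetilde S\le 0\}$, and both implicitly need the minimizers to stay in a fixed ball (individual boundedness of the $\mathcal{A}_n$ does not give uniform boundedness), a point the paper covers only through the coercivity claim of Remark \ref{caR1}. Neither caveat is specific to your proof; on the one step where the two arguments truly differ, yours is the more rigorous.
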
	

\begin{proof}
	
	Denote 
	\[
	(\underline{\vr}^n, \underline{\vm}^n, \underline{S}^n) = 
	{\rm argmin}\, \mathcal{F}^\ep_E[\mathcal{A}_n],\ 
	(\underline{\vr}, \underline{\vm}, \underline{S}) = 
	{\rm argmin}\, \mathcal{F}^\ep_E[\mathcal{A}]. 
	\]	
	In accordance with the Mosco convergence, there exists a sequence 
	\[
	(\vr_n, \vm_n , S_n) \in \mathcal{A}_n \to 
	(\underline{\vr}, \underline{\vm}, \underline{S}) \ \mbox{in}\ 
	L^q_\omega(0,\infty; L^q(\Omega; R^{d+2})).
	\]	
	As $\mathcal{F}^\ep_E$ is continuous, we get 
	\begin{equation} \label{ca13}
		\mathcal{F}^\ep_E (\underline{\vr}^n, \underline{\vm}^n, \underline{S}^n) \leq \mathcal{F}^\ep_E (\vr_n, \vm_n , S_n)
		\to \mathcal{F}^\ep_E (\underline{\vr}, \underline{\vm}, \underline{S}) \ \mbox{as}\ n \to \infty.
	\end{equation}
	
	Using the properties of the Mosco convergence again, we can extract a subsequence (not) relabeled such that 
	\[
	(\underline{\vr}^n, \underline{\vm}^n, \underline{S}^n) 
	\to (\tvr, \tvm, \widetilde{S}) \in \mathcal{A} 
	\ \mbox{weakly in}\ L^q_\omega(0,T; L^q(\Omega; R^{d+2})), 
	\]
	where 
	\begin{equation} \label{ca14}
		\mathcal{F}^\ep_E	(\tvr, \tvm, \widetilde{S}) \leq 
		\liminf_{n \to \infty} \mathcal{F}^\ep_E (\underline{\vr}^n, \underline{\vm}^n, \underline{S}^n).
	\end{equation}	
	
	Comparing \eqref{ca13}, \eqref{ca14} we conclude 
	\[
	(\tvr, \tvm, \widetilde{S}) = (\underline{\vr}, \underline{\vm}, \underline{S}),
	\]
	\begin{equation} \label{ca15}
		(\underline{\vr}^n, \underline{\vm}^n, \underline{S}^n) 
		\to (\underline{\vr}, \underline{\vm}, \underline{S}) 
		\ \mbox{weakly in}\ L^q_\omega(0,\infty; L^q(\Omega; R^{d+2})),
	\end{equation}
	and
	\[
	\mathcal{F}^\ep_E(\underline{\vr}^n, \underline{\vm}^n, \underline{S}^n) 
	\to \mathcal{F}^\ep_E(\underline{\vr}, \underline{\vm}, \underline{S})
	\]
	as $n \to \infty$. As $\mathcal{F}^\ep_E$ is strictly convex, we obtain the desired strong convergence in 
	\eqref{ca15}.
\end{proof}

\begin{Remark} \label{caR1}
	
	As $\mathcal{F}^\ep_E$ are coercive, the hypothesis of boundedness of the convex sets can be omitted.
	
\end{Remark}

Having shown Borel measurability of the mapping $\mathcal{U}^\ep_{ES}$, we deduce the desired measurability of $\mathcal{U}_{ES}$ from the 
following result.

\begin{Lemma} \label{caL1}
Let $\mathcal{A}$ be a bounded convex closed subset of 
$L^q_{\omega}((0, \infty); L^q(\Omega; R^{d+2}))$ 
such that $\mathcal{F}_E|_\mathcal{A}$ is proper. 

Then 
\begin{align}
{\rm argmin}\, \mathcal{F}^\ep_E [\mathcal{A}] \to 
{\rm argmin}\, \mathcal{F}_E [\mathcal{A}]\ \mbox{in}\ 
L^q_{\omega}((0, \infty); L^q(\Omega; R^{d + 2}))
\ \mbox{as}\ \ep \to 0. 
\label{ca10}	
\end{align}

\end{Lemma}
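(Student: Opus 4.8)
The plan is to run the classical Moreau--Yosida (equivalently $\Gamma$-convergence) scheme, exploiting that both functionals are minimised over the \emph{same} bounded closed convex set $\mathcal{A}$. Throughout write $X = L^q_\omega((0,\infty); L^q(\Omega; R^{d+2}))$, which for $1 < q < \infty$ is reflexive and uniformly convex, and abbreviate $U_\ep = {\rm argmin}\, \mathcal{F}^\ep_E[\mathcal{A}]$ and $U_0 = {\rm argmin}\, \mathcal{F}_E[\mathcal{A}]$, viewed as triples $(\vr,\vm,S) \in X$. First I would collect the structural facts. Since $E \geq 0$, the functional $\mathcal{F}_E$ is nonnegative, convex, strictly convex on its domain, and weakly lower semicontinuous on $X$; as $\mathcal{F}_E|_\mathcal{A}$ is proper and $\mathcal{A}$ is weakly compact (bounded, closed and convex in a reflexive space), the minimiser $U_0$ exists and is \emph{unique} by strict convexity. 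For fixed $\ep$ the map $y \mapsto \frac{1}{\ep}\|(\vr,\vm,S) - y\|_X^q + \mathcal{F}_E(y)$ is convex, weakly l.s.c.\ and coercive, so the infimum in \eqref{ca9} is attained; in particular $\mathcal{F}^\ep_E$ is finite, convex and continuous, and a minimiser $U_\ep$ over $\mathcal{A}$ exists.

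Next I would establish convergence of the minimal values. Taking the competitor $y = (\vr,\vm,S)$ in \eqref{ca9} gives $\mathcal{F}^\ep_E \leq \mathcal{F}_E$ pointwise, and the envelopes increase as $\ep$ decreases; the standard argument (if the infimum at a fixed point is realised at some $y_\ep$, then the boundedness of $\frac{1}{\ep}\|\cdot - y_\ep\|_X^q$ forces $y_\ep \to \cdot$ strongly, and lower semicontinuity forces $\mathcal{F}^\ep_E \nearrow \mathcal{F}_E$) yields pointwise convergence. Combining with minimality gives the sandwich
\[
\min_{\mathcal{A}} \mathcal{F}^\ep_E \leq \mathcal{F}^\ep_E(U_0) \leq \mathcal{F}_E(U_0) = \min_{\mathcal{A}} \mathcal{F}_E,
\]
so that $\min_\mathcal{A} \mathcal{F}^\ep_E \to \min_\mathcal{A}\mathcal{F}_E$.

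I would then identify the weak limit of $U_\ep$. Let $y_\ep$ realise the infimum defining $\mathcal{F}^\ep_E(U_\ep)$, so that
\[
\mathcal{F}^\ep_E(U_\ep) = \frac{1}{\ep}\|U_\ep - y_\ep\|_X^q + \mathcal{F}_E(y_\ep) \leq \mathcal{F}_E(U_0).
\]
Since $\mathcal{F}_E \geq 0$, this bounds $\|U_\ep - y_\ep\|_X^q \leq \ep\, \mathcal{F}_E(U_0) \to 0$, whence $U_\ep - y_\ep \to 0$ strongly, and also $\mathcal{F}_E(y_\ep) \leq \mathcal{F}_E(U_0)$. As $U_\ep \in \mathcal{A}$ is bounded, along a subsequence $U_\ep \rightharpoonup \bar U \in \mathcal{A}$ (the set being weakly closed), hence $y_\ep \rightharpoonup \bar U$ too. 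Weak lower semicontinuity gives $\mathcal{F}_E(\bar U) \leq \liminf \mathcal{F}_E(y_\ep) \leq \mathcal{F}_E(U_0)$, while $\bar U \in \mathcal{A}$ forces $\mathcal{F}_E(\bar U) \geq \mathcal{F}_E(U_0)$; by uniqueness of the minimiser $\bar U = U_0$, so the whole family converges weakly. Along the way one also gets $\mathcal{F}_E(y_\ep) \to \mathcal{F}_E(U_0)$.

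Finally, I would upgrade weak to strong convergence, which is the main obstacle. Having $y_\ep \rightharpoonup U_0$ together with $\mathcal{F}_E(y_\ep) \to \mathcal{F}_E(U_0)$, I would argue exactly as in the last step of the proof of Lemma \ref{caL2}: testing the convexity inequality against the midpoints $\tfrac12(y_\ep + U_0) \rightharpoonup U_0$ gives, via weak lower semicontinuity and $\mathcal{F}_E(\tfrac12(y_\ep+U_0)) \leq \tfrac12 \mathcal{F}_E(y_\ep) + \tfrac12 \mathcal{F}_E(U_0)$, that $\mathcal{F}_E(\tfrac12(y_\ep+U_0)) \to \mathcal{F}_E(U_0)$, and the uniform convexity of $X$ combined with the strict convexity of $E$ (thermodynamic stability) then rules out a gap, forcing $y_\ep \to U_0$ strongly. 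Since $U_\ep - y_\ep \to 0$ strongly, this yields $U_\ep \to U_0$ strongly, which is \eqref{ca10}. The delicate point is precisely this implication: weak convergence plus convergence of the energy values is promoted to norm convergence only through the uniformly convex structure of $L^q_\omega$ (valid for $1<q<\infty$) together with strict convexity of the energy, and this is the only place where that structure is genuinely used.
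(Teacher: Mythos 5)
Your proposal follows essentially the same route as the paper's proof: the monotone convergence $\mathcal{F}^\ep_E \nearrow \mathcal{F}_E$, the sandwich inequality $\mathcal{F}^\ep_E(U_\ep) \leq \mathcal{F}^\ep_E(U_0) \leq \mathcal{F}_E(U_0)$ evaluated through the attained infimum point $y_\ep$ (the paper's $(\tvr^\ep, \tvm^\ep, \widetilde{S}^\ep)$), the resulting strong smallness of $U_\ep - y_\ep$, weak compactness of the bounded closed convex set $\mathcal{A}$, and identification of the weak limit with the unique minimiser via weak lower semicontinuity and strict convexity. If anything, you are slightly more complete than the paper, whose proof stops at the weak convergence \eqref{ca12}; your explicit midpoint/strict-convexity upgrade to strong convergence is the same device the paper invokes only at the end of the proof of Lemma \ref{caL2}.
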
	
\begin{proof}
We have 
\begin{align}
0 \leq \mathcal{F}^\ep_E(\vr, \vm, S) \nearrow \mathcal{F}_E(\vr, \vm, S)
\ \mbox{for any}\ (\vr, \vm, S) \in \mathcal{A}. 
\nonumber
\end{align}	
In particular, 
\[
\min_{\mathcal{A}} \mathcal{F}^\ep_E \nearrow \underline{F} \leq 
\min_{\mathcal{A}} \mathcal{F}_E = 
\mathcal{F}_E (\underline{\vr}, \underline{\vm}, \underline{S}),  
\]
where we have denoted
\[
(\underline{\vr}, \underline{\vm}, \underline{S}) =
{\rm argmin}\, \mathcal{F}_E [\mathcal{A}] 
\]

Let us denote
\[
(\underline{\vr}^\ep, \underline{\vm}^\ep, \underline{S}^\ep) =
{\rm argmin}\, \mathcal{F}^\ep_E [\mathcal{A}].
\]
It follows from the strict convexity of the norm in $L^q_\omega$ that there exist a family 
$(\tvr^\ep, \tvm^\ep, \widetilde{S}^\ep)$ such that 
\begin{align}
\mathcal{F}_E(\underline{\vr}, \underline{\vm}, \underline{S} ) \geq 
\mathcal{F}^\ep_E (\underline{\vr}^\ep, \underline{\vm}^\ep, \underline{S}^\ep) = \frac{1}{\ep} 
\| (\underline{\vr}^\ep, \underline{\vm}^\ep, \underline{S}^\ep ) - (\tvr^\ep, \tvm^\ep, \widetilde{S}^\ep) \|^q_{L^q_\omega} + \mathcal{F}_E 
(\tvr^\ep, \tvm^\ep, \widetilde{S}^\ep) \geq 0.
\label{ca11}
\end{align}

Thus, passing to the limit for a suitable subsequence, we have 
\begin{equation} \label{ca12}
(\underline{\vr}^\ep, \underline{\vm}^\ep, \underline{S}^\ep ) \to (\tvr, \tvm, \widetilde{S}), \ 
(\tvr^\ep, \tvm^\ep, \widetilde{S}^\ep) \to 
(\tvr, \tvm, \widetilde{S}) \ \mbox{weakly in}\ L^q_\omega 
\ \mbox{as}\ \ep \to 0, 
\end{equation}
where, as $\mathcal{A}$ is convex, 
\[
(\tvr, \tvm, \widetilde{S}) \in \mathcal{A}. 
\] 

Letting $\ep \to 0$ in \eqref{ca11} using weak lower semi--continuity of $\mathcal{F}_E$, we deduce 
\begin{align}
	\mathcal{F}_E(\underline{\vr}, \underline{\vm}, \underline{S} ) \geq \limsup_{\ep \to 0} \mathcal{F}_E 
	(\tvr^\ep, \tvm^\ep, \widetilde{S}^\ep) \geq 
	\liminf_{\ep \to 0} \mathcal{F}_E
	(\tvr^\ep, \tvm^\ep, \widetilde{S}^\ep) \geq 
	\mathcal{F}_E 
	(\tvr, \tvm, \widetilde{S}) \geq \mathcal{F}_E(\underline{\vr}, \underline{\vm}, \underline{S} ).
	\nonumber
\end{align}
We conclude 
\[
(\tvr, \tvm, \widetilde{S}) =(\underline{\vr}, \underline{\vm}, \underline{S} ),
\]
and the desired conclusion follows from \eqref{ca12}.

\end{proof}	

We conclude that the mapping 
\[
\mathcal{U}_{ES}: (\vr_0, \vm_0, S_0; \mathcal{E}_0) \in X_D \mapsto 
{\rm argmin}\, \mathcal{F}_E \Big[ \mathcal{U}_S [\vr_0, \vm_0, S_0; \mathcal{E}_0] \Big]
\in L^q_\omega(0, \infty; L^q(\Omega; R^{d+2}))
\]
being a pointwise limit of Borel--measurable mappings is Borel--measurable. 

Let us summarise the properties of the solutions selected in Step 2. 

\begin{Theorem}[{\bf Properties of solutions selected in Step 2}] \label{caT2}
	
	Let 
	\[
	(\vr, \vm, S) = \mathcal{U}_{ES} (\vr_0, \vm_0, S_0, \mathcal{E}_0).
	\]
	
	Then the following holds:
	
	\begin{itemize}
	 
		\item The mapping 
		\[
		(\vr_0, \vm_0, S_0; \mathcal{E}_0) \in X_D \mapsto 
		\mathcal{U}_{ES}(\vr_0, \vm_0, S_0; \mathcal{E}_0) \in L^q_\omega(0,\infty; L^q(\Omega; R^{d+2}))
		\]
		is Borel--measurable.
		
		\item The energy defect (turbulent energy) vanishes for large times, 
		\[
		\mathcal{D}(\vr, \vm, S)(t\pm) \to 0 \ \mbox{as}\ t \to \infty.
		\]
		\item If $\mathcal{U}_S[\vr_0, \vm_0, S_0; \mathcal{E}_0]$ is not a singleton, then $(\vr, \vm, S)$ is a truly measure--valued solution, meaning 
		$\mathcal{D} (\vr, \vm, S)\ne 0$.
		
	\end{itemize}

\end{Theorem}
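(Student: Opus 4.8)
The plan is to treat the three assertions separately, since the first two are essentially corollaries of results already in place, whereas the third requires a short but delicate convexity argument.

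For the measurability statement I would simply assemble the machinery of the preceding subsection. By Theorem~\ref{caT1} the set-valued map $\mathcal{U}_S$ is Borel measurable from $X_D$ into the closed convex subsets of $L^q_\omega(0,\infty; L^q(\Omega; R^{d+2}))$ equipped with the Wijsman (equivalently, Mosco) topology. Lemma~\ref{caL2} together with Remark~\ref{Rac1} shows that the regularised single-valued selection $\mathcal{A} \mapsto {\rm argmin}\, \mathcal{F}^\ep_E[\mathcal{A}]$ is continuous in that topology, whence the composition $\mathcal{U}^\ep_{ES}$ is Borel measurable for each $\ep > 0$. Finally, Lemma~\ref{caL1} yields the pointwise convergence $\mathcal{U}^{\ep_n}_{ES} \to \mathcal{U}_{ES}$ along any sequence $\ep_n \to 0$, and a pointwise limit of Borel-measurable maps is Borel measurable; this is exactly the conclusion displayed just before the theorem.

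For the vanishing of the energy defect I would observe that the selected solution lies in $\mathcal{U}_S[\vr_0, \vm_0, S_0; \mathcal{E}_0]$ by construction, being the minimiser of $\mathcal{F}_E$ over that set. Hence the third bullet of Theorem~\ref{caT1}, which in turn rests on the concatenation argument of Proposition~\ref{saP1}, applies verbatim and gives $\mathcal{D}(\vr, \vm, S)(t\pm) \to 0$ as $t \to \infty$.

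The only genuinely new point is the third assertion, which I would establish by contraposition. Suppose the selected solution $(\vr, \vm, S)$ satisfies $\mathcal{D}(\vr, \vm, S) \equiv 0$, i.e. is a weak solution. Then its mean energy equals $\mathcal{E}_0$ for a.a. $t$, so that $\mathcal{F}_E(\vr, \vm, S) = \mathcal{E}_0 \int_0^\infty \exp(-t)\,\dt = \mathcal{E}_0$. Since $\mathcal{D} \geq 0$ forces $\intO{ E(\vr, \vm, S)(t, \cdot) } \leq \mathcal{E}_0$, and hence $\mathcal{F}_E \leq \mathcal{E}_0$ on the whole of $\mathcal{U}_S$, while $(\vr, \vm, S)$ is precisely the minimiser of $\mathcal{F}_E$ over $\mathcal{U}_S$, we conclude that $\mathcal{F}_E \equiv \mathcal{E}_0$ on the convex set $\mathcal{U}_S$. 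But $\mathcal{F}_E$ is strictly convex, as a consequence of the strict convexity of $E$ on its domain (thermodynamic stability), and a strictly convex functional cannot be constant on any non-degenerate segment: if $\mathcal{U}_S$ contained two distinct points, strict convexity evaluated at their midpoint would give a value strictly below $\mathcal{E}_0$, a contradiction. Therefore $\mathcal{U}_S$ must be a singleton, which is the contrapositive of the claim. The main obstacle here is bookkeeping of the optimisation directions: a weak solution realises the \emph{maximal}, not the minimal, value of $\mathcal{F}_E$, and the crux is to identify that maximal value with $\mathcal{E}_0$ and then deploy strict convexity on the convex set $\mathcal{U}_S$; once the direction (${\rm argmin}$ of the mean energy $=$ ${\rm argmax}$ of the defect) is tracked correctly, the argument closes immediately.
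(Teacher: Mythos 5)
Your proof is correct and follows the paper's own route: Theorem \ref{caT2} is stated there as a summary, with the first bullet resting exactly on the regularisation machinery you cite (Lemma \ref{caL2} with Remark \ref{Rac1}, then Lemma \ref{caL1} and the fact that a pointwise limit of Borel maps is Borel), and the second bullet following from membership of the Step-2 selection in $\mathcal{U}_S[\vr_0,\vm_0,S_0;\mathcal{E}_0]$ combined with Theorem \ref{caT1} and Proposition \ref{saP1}, just as you assemble them. Your contraposition for the third bullet --- a weak solution attains the maximal possible value $\mathcal{E}_0$ of $\mathcal{F}_E$, so if it were also the minimiser then $\mathcal{F}_E$ would be constant on the convex set $\mathcal{U}_S$, which strict convexity (thermodynamic stability) forbids unless $\mathcal{U}_S$ is a singleton --- is precisely the argument the paper leaves implicit (cf.\ the remark in Section \ref{i} that direct minimisation of $\mathcal{F}_E$ ``would automatically eliminate all weak solutions'' unless the solution set is degenerate), and you carry it out correctly, including the key bookkeeping that zero defect means maximal, not minimal, mean energy.
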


\section{Basic properties of the selected solution}
\label{conc}

We conclude by recalling two essential properties of the selected solutions.

\subsection{Semigroup property}
\label{COO1}

Repeating the arguments of \cite{BreFeiHof19C}, we may show that the solution mapping 
\[
\mathcal{S}: t \in [0, \infty) ,\ (\vr, \vm, S; \mathcal{E}_0) \in X_D \mapsto \Big[ \mathcal{U}_{ES} (\vr, \vm, S; \mathcal{E}_0)(t - ,\cdot) ; \mathcal{E}_0 \Big] 
\]
enjoys the semigroup property. Specifically, 
\begin{align}
\mathcal{S}(t + s; 	(\vr, \vm, S; \mathcal{E}_0) ) = 
\mathcal{S} \Big( s; \mathcal{S} (\vr, \vm, S; \mathcal{E}_0) (t, \cdot) \Big),\quad t,s \geq 0.
\label{conc1}	
\end{align}

\subsection{Almost continuity}
\label{COO2}

In view of the Borel measurability of the solution mapping 
\[
\mathcal{U}_{ES}: X_D \to L^q_{\omega}(0, \infty; L^q(\Omega; R^{d+2})),
\]
we may use an abstract version of Egorov (Lusin) theorem 
(cf. Wisniewski \cite{Wis}) to deduce the following conclusion:

Let $\mu$ be a complete Borel probability measure on $X_D$. Then for any $\ep > 0$, there exists a closed set $K \subset X_D$, $\mu[K] > 1-\ep$ such that 
\[
\mathcal{U}_{ES}|K \to L^q_{\omega}(0, \infty; L^q(\Omega; R^{d+2}))
\]
is continuous. As a matter of fact, Wisniewski \cite{Wis} proves a stronger result, namely there is a sequence of \emph{continuous} mappings 
$(\mathcal{U}^\ep_{ES})_{\ep > 0}$ such that
\[
\mathcal{U}^\ep_{ES} \to \mathcal{U}_{ES} \quad  \mu - \mbox{almost surely.}
\]

\subsection{Instantaneous values of selected solutions}

It is easy to see that the instantaneous values of dissipative solution belong to the set $X_D$, 
\[
\Big[ \mathcal{U}_{ES}(\vr_0, \vm_0, S_0; \mathcal{E}_0); \mathcal{E}_0 \Big] (\tau-, \cdot) \in X_D \ \ \mbox{for any}\ \tau > 0.
\]
Moreover, the entropy $S$ can be identified with a c\` agl\` ad mapping ranging in the space $W^{-\ell, 2}(\Omega)$. In particular, 
the entropy $S$ belongs to the Skorokhod space of  c\` agl\` ad mappings,
\[
S \in D([0, \infty); W^{-\ell,2}(\Omega)).
\]
We refer to \cite[Appendix A1]{FeiNovOpen} for basic properties of the space $D([0, \infty); W^{-\ell,2}(\Omega))$.

Similarly to \cite{FeiJuLu}, we can show Borel measurability of the mapping 
\begin{equation} \label{conc2}
(\vr_0, \vm_0, S_0; \mathcal{E}_0) \in X_D \mapsto \Big[ \mathcal{U}_{ES} (\vr_0, \vm_0, S_0; \mathcal{E}_0) (\tau - , \cdot); \mathcal{E}_0 \Big] \in X_D
\end{equation}
for any fixed $\tau$. In particular, the ``almost continuity'' discussed in the preceding section can be extended to the mapping \eqref{conc2} for any fixed 
$\tau > 0$. We leave the details to the interested reader.

\section{One-step selection process - maximal dissipativity}
\label{O}

The two-step selection process is probably satisfactory from the analysis point of view, but less convenient in numerical implementations. We propose 
a single-step selection procedure inspired by the Second law of thermodynamics, formulated in the celebrated statement by Clausius:
\smallskip

\centerline{\it The energy of the world is constant; its entropy tends to a maximum.}

\smallskip

\noindent The total energy of a dissipative solution is given by its initial value $\mathcal{E}_0$. The total mass 
\[
M_0 = \intO{ \vr_0 }
\]
is a constant of motion as well. We define the \emph{equilibrium density and temperature} 
\begin{equation} \label{O1}
\Ov{\vr} = \frac{1}{|\Omega|}\intO{ \vr_0 } = \frac{M_0}{|\Omega|},\ \intO{ \Ov{\vr} e(\Ov{\vr}, \Ov{\vt}) } = \mathcal{E}_0\ \Rightarrow \
\Ov{\vt} = \frac{\mathcal{E}_0}{c_v M_0}. 
\end{equation}
The total entropy $S$ expressed constitutively in terms of $\vr$ and the internal energy ${E}_i = c_v \vr \vt$ is a concave function, in particular 
\begin{equation} \label{O2}
S(\vr(t, \cdot), E_i(t, \cdot)) - \frac{\partial S(\Ov{\vr}, \Ov{\vt}) }{\partial \vr}(\vr - \Ov{\vr}) - 	\frac{\partial S(\Ov{\vr}, \Ov{\vt}) }{\partial E_i}(c_v \vr \vt - c_v \Ov{\vr} \Ov{\vt} ) - S(\Ov{\vr}, c_v \Ov{\vr} \Ov{\vt}) \leq 0.
\end{equation}	
Integrating \eqref{O2} and using \eqref{O1}, together with the inequality 
\[
\intO{ c_v \vr \vt } \leq \mathcal{E}_0, 
\]
we conclude 
\begin{equation} \label{O3}
\intO{ S(t \pm , \cdot) } \leq S(\Ov{\vr}, c_v \Ov{\vr} \Ov{\vt}) \ \mbox{for all}\ t 
\end{equation}	
for any dissipative solution of the Euler system. In other words, the entropy is maximised at the constant equilibrium state $\Ov{\vr}, \Ov{\vt}$ specified 	
in \eqref{O1}. 

Summarising the previous discussion, we conclude that a selection criterion compatible with the Second law of thermodynamics should minimise the 
distance between a dissipative solution $(\vr, \vm, S)$ and the constant equilibrium $(\Ov{\vr}, 0, S(\Ov{\vr}, \Ov{\vt}))$. To measure the distance, we 
use the Bregmann divergence associated to the total energy $E(\vr, \vm, S)$, namely  
\begin{align} 
E &\left( \vr, \vm, S \Big| \Ov{\vr}, 0, \Ov{\vt} \right) \br &= 
E(\vr, \vm, S) -  \left( c_v \Ov{\vt} - \Ov{\vt} s (\Ov{\vr}, \Ov{\vt}) + \frac{p (\Ov{\vr}, \Ov{\vt} )}{\Ov{\vr}} \right)(\vr - \Ov{\vr}) 
- \Ov{\vt} \Big(S - S(\Ov{\vr}, \Ov{\vt}) \Big) - \Ov{\vr} e(\Ov{\vr}, \Ov{\vt}),
\label{O4} 	
\end{align}	
see \cite[Chapter 4, Section 4.1.6]{FeLMMiSh}. 

Relation \eqref{O4} integrated over $\Omega$ yields 
\begin{equation} 
	\intO{ E \left( \vr, \vm, S \Big| \Ov{\vr}, 0, \Ov{\vt}\right) } = 
	 \intO{ E(\vr, \vm, S) } 
	- \Ov{\vt} \intO{ \Big(S - S(\Ov{\vr}, \Ov{\vt}) \Big) } - \Ov{\vr} e(\Ov{\vr}, \Ov{\vt})|\Omega|.
	\label{O5} 	
\end{equation}	
This motivates the definition of a cost functional in the form 
\begin{equation} \label{O6}
\mathcal{F}_{D} [\vr, \vm, S; \mathcal{E}_0] = \int_0^\infty \exp(-t) \left( \intO{ \left( E(\vr, \vm, S) - \Ov{\vt} S \right) } \right) \dt,\  
\quad  \Ov{\vt} = \frac{\mathcal{E}_0}{c_v M_0 }.
\end{equation}	
The cost functional being strictly convex, the minimisation procedure yields a unique solution in only one step:
\begin{equation} \label{O7}
\mathcal{U}_D(\vr, \vm, S; \mathcal{E}_0) = {\rm argmin}\, \mathcal{F}_{D} \Big[ \mathcal{U}[\vr, \vm, S; \mathcal{E}_0] \Big]. 
\end{equation}
Borel measurability of the solution mapping can be shown exactly as in Section \ref{ds}. We call the quantity 
\[
\mathcal{U}_D(\vr, \vm, S; \mathcal{E}_0); \mathcal{E}_0 
\] 
\emph{maximal dissipative solution of the Euler system}. 

We have shown the following result. 

\begin{Theorem}[{\bf Maximal dissipative solution}] \label{OT1}
For any initial data $(\vr_0, \vm_0, S_0; \mathcal{E}_0) \in X_D$, the Euler system admits a unique maximal dissipative solution 
\[
\mathcal{U}_D(\vr, \vm, S; \mathcal{E}_0)
\]	
given through formula \eqref{O7}. The mapping 
\[
(\vr_0, \vm_0, S_0; \mathcal{E}_0) \in X_D \mapsto \mathcal{U}_D(\vr, \vm, S; \mathcal{E}_0) \in L^q_{\omega}(0, \infty; L^q(\Omega; R^{d+2}))
\]
is Borel--measurable. The solution mapping enjoys the properties of an almost continuous semigroup specified in Sections 
\ref{COO1}, \ref{COO2}.
\end{Theorem}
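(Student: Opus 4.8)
The plan is to reuse, almost verbatim, the three-part structure already developed for the two-step process in Sections~\ref{step1}--\ref{conc}: existence and uniqueness via strict convexity, Borel measurability via the Moreau--Yosida regularisation, and the semigroup and almost-continuity via the abstract results of Sections~\ref{COO1}, \ref{COO2}. First I would establish existence and uniqueness of the minimiser in \eqref{O7}. The decisive observation is the decomposition
\[
\mathcal{F}_D = \mathcal{F}_E - \Ov{\vt}\, \mathcal{F}_S,
\]
so that $\mathcal{F}_D$ is the sum of the strictly convex functional $\mathcal{F}_E$ (strict convexity of $E$ being equivalent to thermodynamic stability) and the bounded linear form $-\Ov{\vt}\,\mathcal{F}_S$; hence $\mathcal{F}_D$ is itself strictly convex. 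Since $\mathcal{U}[\vr_0, \vm_0, S_0; \mathcal{E}_0]$ is non--empty, convex, closed and bounded in the reflexive space $L^q_\omega$ for $1 < q \leq \frac{2\gamma}{\gamma+1}$ (property ``Convexity and compactness''), it is weakly compact, while $\mathcal{F}_D$ is weakly lower semicontinuous, $\mathcal{F}_E$ being convex and l.s.c. and $\mathcal{F}_S$ weakly continuous. A weakly l.s.c. functional attains its infimum on a weakly compact set, and strict convexity forces uniqueness of the minimiser; this is precisely formula \eqref{O7}.

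Next I would transcribe the measurability argument of Section~\ref{ds} with the reduced set $\mathcal{U}_S$ replaced by the full solution set $\mathcal{U}$ and with $\mathcal{F}_E$ replaced by $\mathcal{F}_D$. The set--valued map $\mathcal{U}$ is Borel measurable into the compact subsets with the Hausdorff topology by \eqref{ca2}, and the Castaing--representation device of Section~\ref{step1} upgrades this to Borel measurability into ${\rm closed}[L^q_\omega]$ equipped with the Wijsman topology. I would then introduce the regularisation $\mathcal{F}^\ep_D$ exactly as in \eqref{ca9}. Lemma~\ref{caL2} applies verbatim, as its proof uses only continuity and strict convexity of the regularised functional; this yields continuity of $\mathcal{A}\mapsto{\rm argmin}\,\mathcal{F}^\ep_D[\mathcal{A}]$ in the Mosco topology and thus Borel measurability of $\mathcal{U}^\ep_D$. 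Lemma~\ref{caL1} then gives ${\rm argmin}\,\mathcal{F}^\ep_D[\mathcal{A}]\to{\rm argmin}\,\mathcal{F}_D[\mathcal{A}]$ as $\ep\to0$, so that $\mathcal{U}_D$ is a pointwise limit of Borel--measurable maps and hence Borel measurable.

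The one point requiring care—and the only genuine deviation from Section~\ref{ds}—is that $\mathcal{F}_D$ is no longer manifestly non--negative, because of the subtracted entropy term, whereas the proofs of Lemmas~\ref{caL1}, \ref{caL2} exploited $\mathcal{F}_E\geq0$. I would resolve this via the Bregman representation \eqref{O5}: integrating \eqref{O4} shows that $\intO{ E(\vr,\vm,S\,|\,\Ov{\vr},0,\Ov{\vt}) }$ equals $\intO{ (E - \Ov{\vt} S) }$ up to an additive constant depending only on $\Ov{\vr}, \Ov{\vt}, |\Omega|$, and the left--hand side is non--negative. Consequently $\mathcal{F}_D$ coincides, up to an additive constant, with a non--negative coercive functional; by Remark~\ref{caR1} the boundedness hypotheses in Lemmas~\ref{caL1}, \ref{caL2} may then be dropped, and the regularisation machinery carries over unchanged. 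I expect this coercivity check to be the main (and essentially the only) obstacle.

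Finally, the semigroup property and the almost continuity are obtained verbatim from Sections~\ref{COO1} and \ref{COO2}: the concatenation argument of \cite{BreFeiHof19C} yields the semigroup identity \eqref{conc1} for $\mathcal{S}$ built from $\mathcal{U}_D$, while the Wisniewski \cite{Wis} version of the Egorov--Lusin theorem converts the Borel measurability established above into continuity of $\mathcal{U}_D$ off a set of arbitrarily small $\mu$--measure, completing the proof.
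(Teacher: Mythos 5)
Your proposal is correct and follows essentially the same route as the paper: the paper likewise treats $\mathcal{F}_D = \mathcal{F}_E - \Ov{\vt}\,\mathcal{F}_S$ as a strictly convex functional minimised in one step over $\mathcal{U}[\vr_0,\vm_0,S_0;\mathcal{E}_0]$, obtains Borel measurability ``exactly as in Section \ref{ds}'' (Moreau--Yosida regularisation, Lemmas \ref{caL2} and \ref{caL1}), and inherits the semigroup and almost-continuity properties from Sections \ref{COO1}, \ref{COO2}. Your explicit treatment of the lower bound for $\mathcal{F}_D$ via the integrated Bregman identity \eqref{O5} is a detail the paper leaves implicit (it is exactly what relations \eqref{O2}--\eqref{O6} are set up to provide), and it is resolved correctly.
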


\section{Conclusion}
Recent breakthrough results on the non-uniqueness of admissible (entropy) weak solutions for the Euler equations of gas dynamics have initiated intensive discussions on the possibility of recovering well-posedness through a suitable selection process. In this paper, we have proposed two selection strategies.  

The first one is a two-step selection: in the first step, the total entropy 
\[
\int_0^\infty \exp(-t) \left( \intO{ S(t, \cdot) } \right) \dt
\]
is maximised in the set of generalised, dissipative measure--valued solutions. If the first step failed to identify a unique solution, we proceed to the second step. Then, in the set selected in the first step, we search for a solution  that minimises the mean energy \[
\int_0^\infty \exp(-t) \left( \intO{ 
E(\vr, \vm, S) (t, \cdot)}	\right) \dt.
\]
As the latter is a strictly convex functional in its domain, we have selected a unique solution of the Euler equations.  

We have observed that if the selected solution is a 
weak solution of the Euler system, then it must have been selected in the first step.
The solution  selected in the second step is \emph{a unique turbulent solution, i.e.~a truly measure--valued solution.}

The second selection strategy combines directly the above two selection steps yielding  a one-step selection: 
\[
\int_0^\infty \exp(-t) \left( \intO{ \left( E(\vr, \vm, S) - \Ov{\vt} S \right) } \right) \dt \quad \mbox{with}\ \Ov{\vt} = \frac{\mathcal{E}_0}{c_v M_0 },
\]
where $\mathcal{E}_0$ is the initial total  energy and $M_0$ the total mass. We have proved that  our selection strategies yield 
Borel--measurable initial data $\mapsto$ solution mappings. Moreover, the  solution mappings are almost continuous and enjoy the semigroup property. In the future, we aim to design a numerical method that approximates the unique solution selected by the proposed selection strategies.

\section*{Compliance with Ethical Standards}\label{conflicts}

\small
\par\noindent

{\bf Author contribution declaration statement}.

Eduard Feireisl: Writing – review \& editing, Investigation.

M\' aria Luk\'a\v{c}ov\'a-Medvi\softd ov\'a : Writing – review \& editing, Investigation. 

\smallskip
\par\noindent 

{\bf Conflict of Interest}. The authors declare that they have no conflict of interest.

\smallskip
\par\noindent

{\bf Data Availability}. Data sharing is not applicable to this article as no datasets were generated or analysed during the current study.


\def\cprime{$'$} \def\ocirc#1{\ifmmode\setbox0=\hbox{$#1$}\dimen0=\ht0
	\advance\dimen0 by1pt\rlap{\hbox to\wd0{\hss\raise\dimen0
			\hbox{\hskip.2em$\scriptscriptstyle\circ$}\hss}}#1\else {\accent"17 #1}\fi}

\end{document}